\newtheorem{thm}{Theorem}[section]
\newtheorem{lem}[thm]{Lemma}
\newtheorem{prop}[thm]{Proposition}
\newtheorem{assumption}[thm]{Assumption}
\theoremstyle{definition}
\newtheorem{defn}[thm]{Definition}
\theoremstyle{remark}
\newtheorem{rmk}[thm]{Remark}
\numberwithin{equation}{section}
\def\grad{\nabla}
\renewcommand{\H}[1][1]{W^{#1,2}(\Omega)}
\newcommand{\R}{\mathbb{R}}
\newcommand{\N}{\mathbb{N}}
\newcommand{\D}{\mathcal{D}}
\newcommand{\A}{\mathscr{A}}
\def\g{\gamma}
\def\l{\lambda}
\def\o{\omega}
\def\t{\tau}
\def\G{\Gamma}
\def\O{\Omega}
\def\A{\mathscr{A}}
\newcommand{\norm}[1]{\left\|#1\right\|}
\newcommand{\abs}[1]{\left|#1\right|}
\def\N{\mathbb{N}}
\def\lra{\longrightarrow}
\def\<{\langle}
\def\>{\rangle}
\def\lra{\longrightarrow}
\def\ra{\rightarrow}
\def\H{H^1_{\G_0}}
\begin{document}

\author[A. R. Becklin]{Andrew R. Becklin}
\address{Department of Mathematics and Computer Science, Drake University, Des Moines, IA  50311, USA} \email{andrew.becklin@drake.edu}

\author[Y. Guo]{Yanqiu Guo}
\address{Department of Mathematics and Statistics, Florida International University, Miami, FL 11101, USA} \email{yanguo@fiu.edu}

\title[A structural acoustic model]{Strong and weak solutions to a structural acoustic model with a $C^1$ source term on the plate}

\date{\today}
\subjclass[2020]{35L70}
\keywords{structural acoustic models, wave-plate models, local and global well-posedness, source terms, nonlinear semigroups, potential well}

\begin{abstract}
In this manuscript, we consider a structural acoustic model consisting of a wave equation defined in a bounded domain $\Omega \subset \mathbb{R}^3$, strongly coupled with a Berger plate equation acting on the flat portion of the boundary of $\Omega$. The system is influenced by an arbitrary $C^1$ nonlinear source term in the plate equation. Using nonlinear semigroup theory and monotone operator theory, we establish the well-posedness of both local strong and weak solutions, along with conditions for global existence. With additional assumptions on the source term, we examine the Nehari manifold and establish the global existence of potential well solutions. Our primary objective is to characterize regimes in which the system remains globally well-posed despite arbitrary growth of the source term and the absence of damping mechanisms to stabilize the dynamics.
\end{abstract}

\maketitle

\section{Introduction}\label{S1}

\subsection{The Model}
In this paper, we study a structural acoustic model influenced by a nonlinear source term $h(w)$ on the plate. Precisely, we study the coupled system of PDEs:
\begin{align}\label{PDE}
\begin{cases}
u_{tt}-\Delta u =0 &\text{ in } \O \times (0,T),\\[1mm]
w_{tt}+\Delta^2w+u_t|_{\G}=h(w)&\text{ in }\G\times(0,T),\\[1mm]
u=0&\text{ on }\G_0\times(0,T),\\[1mm]
\partial_\nu u=w_t&\text{ on }\G\times(0,T),\\[1mm]
w=\partial_{\nu_\G}w=0&\text{ on }\partial\G\times(0,T),\\[1mm]
(u(0),u_t(0))=(u_0,u_1),\hspace{5mm}(w(0),w_t(0))=(w_0,w_1),
\end{cases}
\end{align}
where the  initial data reside in the finite energy space, i.e., 
$$(u_0, u_1)\in H^1_{\G_0}(\O) \times L^2(\O) \, \text{ and }(w_0, w_1)\in H^2_0(\G)\times L^2(\G).$$

In this model, $\O \subset \R^3$ is a bounded, open, connected domain with smooth boundary $\partial \O = \overline{\G_0 \cup \G}$, where $\G_0$ and $\G$ are two disjoint, open, connected sets of positive Lebesgue measure. The set $\G$ is the \emph{flat} portion of the boundary of $\O$ and is referred to as the elastic wall, while $\G_0$ represents the rigid wall. The nonlinearity $h(w)$ represents a source term acting on the plate equation; it is allowed to have a ``bad'' sign and may therefore cause instability. The vectors $\nu$ and $\nu_\G$ denote the outer normals to $\G$ and $\partial \G$, respectively.

Models such as \eqref{PDE} arise in the context of modeling gas pressure in an 
acoustic chamber $\O$ which is surrounded by a combination of rigid and 
flexible walls. The pressure in the chamber is described by the solution 
to a wave equation, while vibrations of the flexible wall are described 
by the solution to a Berger plate equation.  We refer the reader to \cite{Chueshov-1999} and the references quoted therein for more details on the Berger model.  The coupling of the wave and plate equations occurs through the term $u_t|_{\G}$.  This acoustic pressure term can often create unique challenges outside of those generally faced in the study of nonlinear hyperbolic equations.  We refer the reader to work by Beale \cite{Beale76} for a derivation of this term.

The necessity to examine models containing nonlinearities lies in their ability to allow for refinements on weaker responses from equilibrium.  More generally, they enable one to capture coupling of the model with other types of dynamics.  Oftentimes the implementation of nonlinear source terms comes with some amount of damping terms in order to control the behavior of solutions, although none are present in \eqref{PDE}. In light of the significance of studying nonlinear source terms, we are primarily interested in treating $h$ in great generality, requiring only that it be a $C^1$ function in this paper.

\subsection{Literature Overview}
Structural acoustic interaction models have a rich history. These models are well known in both the physical and mathematical literature and date to the canonical models considered in \cite{Beale76,Howe1998}. In the context of stabilization and controllability of structural acoustic models, there is a large body of literature, and we refer the reader to the monograph by Lasiecka \cite{Las2002} which provides a comprehensive overview and quotes many works on these topics. Other related contributions worthy of mention include \cite{Avalos2,Avalos1,Avalos3,Avalos4,Cagnol1,MG1,MG3,LAS1999}.

Previous work by Becklin and Rammaha \cite{Becklin-Rammaha2} studied a model similar to \eqref{PDE}, where nonlinear source and damping terms appeared in both the wave and plate equations, and they established well-posedness criteria. Feng, Guo, and Rammaha \cite{FengGuoRammaha1,FengGuoRammaha2} built on Becklin and Rammaha \cite{Becklin-Rammaha2} to establish potential well solutions, energy decay rates, and blow-up of solutions under various scenarios.

We utilize monotone operator theory and nonlinear semigroups to establish local well-posedness of strong and weak solutions to system \eqref{PDE}, in particular implementing Kato's Theorem as in \cite{Barbu2,Sh}. The novelty of this manuscript lies in the limited assumptions imposed on the term $h$, as well as the absence of any damping term on the plate.  
We establish the local well-posedness of strong and weak solutions for \eqref{PDE} under the sole assumption that $h$ is a $C^1$ function.  
Moreover, we show that if $|h(s)| \leq c(|s|+1)$, then both strong and weak solutions exist globally.

This manuscript also considers potential well solutions of system \eqref{PDE}. 
The study of such solutions for nonlinear hyperbolic equations has a long history. 
For example, in their classical work \cite{PS}, Payne and Sattinger studied potential well solutions of a nonlinear hyperbolic equation in the canonical form
\begin{align}  \label{canonical}
u_{tt} = \Delta u + f(u), \;\; \text{with} \;\; u(0)=u_0, \;\; u_t(0)=u_1,
\end{align}
subject to the boundary condition $u=0$ on the boundary of a bounded domain $\Omega \subset \mathbb{R}^n$.  
In the literature, power-type nonlinear terms are usually considered when applying potential well theory to hyperbolic PDEs. 
Previous work involving non-power source terms was carried out by Alves and Cavalcanti \cite{AlvesCavalcanti}, 
who studied an exponential source in a two-dimensional wave equation and established potential well solutions, 
building on the work of Ambrosetti and Rabinowitz \cite{AMBROSETTI1973349}. See also \cite{MaTF, Minh} for studies of nonlinear hyperbolic equations with exponential nonlinearities.
In this manuscript, by taking advantage of the additional regularity of solutions provided by the plate, 
we consider a more general $C^1$ source term $h(w)$ satisfying Assumption \ref{assumption3}, 
and we establish the global existence of potential well solutions in both the weak and strong sense.

\subsection{Structure of the Paper}

The manuscript is organized as follows. In Section~\ref{sec-results}, we state the main results of the paper concerning system~(\ref{PDE}).  
In Section~\ref{sec-strong}, we establish the local and global well-posedness of strong solutions.  
In Section~\ref{sec-weak}, we justify the local and global well-posedness of weak solutions.  
In Section~\ref{sec-potential}, we prove the global existence of solutions within the potential well framework.

\vspace{0.1 in}

\section{Preliminaries and Main Results} \label{sec-results}
\subsection{Notation}

Throughout the paper, the following notational conventions for $L^p$ space norms and standard inner products will be used:
\begin{align*}
&||u||_p=||u||_{L^p(\O)}, &&(u,v)_\O = (u,v)_{L^2(\O)},\\
&|u|_p=||u||_{L^p(\G)},&&(u,v)_\G = (u,v)_{L^2(\G)}.
\end{align*}
We use both $\g u$ and $u_{|_\G}$ to denote the \emph{trace} of $u$ on $\G$.  We write $\frac{d}{dt}(\g u(t))$ as $\g u_t$.  As is customary, $C$ shall always denote a positive constant which may change from line to line.

We put
$$\H(\O):=\{u\in H^1(\O):u|_{\G_0}=0\}.$$ 

Furthermore, we adopt the standard convention of using Poincar\'{e}'s Inequality to identify the norm $\|u\|_{H^1_{\G_0}}$ with $\|\nabla u\|_2$.  In this we put:
$$\norm{u}_{\H(\O)}= \norm{\grad u}_2,\,\, (u,v)_{\H(\O)}=(\grad u,\grad v)_\O.$$
Similarly, we put:
$$\norm{w}_{H_0^2(\G)}= \abs{\Delta w}_2, \,\, (w,z)_{H^2_0(\G)}=(\Delta w,\Delta z)_\G.$$
For convenience and brevity, we shall  frequently use the notation:
\[
\norm{u}_{1, \O}= \norm{\grad u}_2,\,\,\, \norm{w}_{2, \G}= \abs{\Delta w}_2.
\]
With $Y$ a Banach space, we denote the duality pairing between the dual space $Y'$ and $Y$ by 
$\<\psi,y\>_{Y',Y}$, or simply by  $\<\cdot,\cdot\>$ when the context is clear. That is, 
\begin{align*}
\< \psi,y \> =\psi(y)\text{ for }y\in Y,\, \psi \in Y'.
\end{align*}

\vspace{0.1 in}

\subsection{Assumptions}
Throughout the paper, we study \eqref{PDE} under the following assumption.

\begin{assumption}\label{assumption1} 
$h: \mathbb R \rightarrow \mathbb R$ is a function in $C^1(\R)$.
\end{assumption}

Under Assumption \ref{assumption1}, we obtain the following result.  
\begin{prop}\label{hprop}  
Suppose $h \in C^1(\mathbb{R})$. Let $\G \subset \R^2$ be a bounded open domain with a $C^1$ boundary.  
Then the Nemytskii operator $h$ is locally Lipschitz from $H^{1+\epsilon}_0(\G)$ into $L^2(\G)$ for any $\epsilon > 0$.  
\end{prop}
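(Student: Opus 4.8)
The plan is to reduce the statement to the critical Sobolev embedding $H^{1+\epsilon}_0(\Gamma)\hookrightarrow L^\infty(\Gamma)$ in two dimensions, and then to exploit the $C^1$ regularity of $h$ through the mean value theorem.

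First I would record the embedding. Since $\Gamma\subset\mathbb R^2$ is bounded with $C^1$ boundary and $1+\epsilon>\tfrac{2}{2}=1$, the Sobolev embedding theorem gives
\[
H^{1+\epsilon}_0(\Gamma)\subset H^{1+\epsilon}(\Gamma)\hookrightarrow C(\overline\Gamma)\subset L^\infty(\Gamma),
\]
with a constant $C_\Gamma$ so that $\|w\|_{L^\infty(\Gamma)}\le C_\Gamma\|w\|_{H^{1+\epsilon}(\Gamma)}$. In particular each $w\in H^{1+\epsilon}_0(\Gamma)$ has a bounded continuous representative, so $h(w)$ is continuous on $\overline\Gamma$, hence measurable and bounded; as $\Gamma$ is bounded this already shows $h(w)\in L^2(\Gamma)$, i.e. the Nemytskii operator indeed maps $H^{1+\epsilon}_0(\Gamma)$ into $L^2(\Gamma)$.

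Next, fix $R>0$ and take $w_1,w_2\in H^{1+\epsilon}_0(\Gamma)$ with $\|w_i\|_{H^{1+\epsilon}(\Gamma)}\le R$. By the embedding, $\|w_i\|_{L^\infty(\Gamma)}\le M:=C_\Gamma R$ for $i=1,2$. Since $h\in C^1(\mathbb R)$, its derivative is continuous and hence bounded on $[-M,M]$; put $K_M:=\max_{|s|\le M}|h'(s)|$. Applying the mean value theorem to $h$ along the segment joining $w_2(x)$ and $w_1(x)$ — which stays in $[-M,M]$ — yields the pointwise estimate
\[
|h(w_1(x))-h(w_2(x))|\le K_M\,|w_1(x)-w_2(x)|\qquad\text{for a.e. }x\in\Gamma.
\]
Squaring, integrating over $\Gamma$, and taking square roots gives $\|h(w_1)-h(w_2)\|_{L^2(\Gamma)}\le K_M\|w_1-w_2\|_{L^2(\Gamma)}$; combining this with the bounded embedding $H^{1+\epsilon}_0(\Gamma)\hookrightarrow L^2(\Gamma)$ (valid since $\Gamma$ is bounded, with constant $C_\Gamma'$) produces
\[
\|h(w_1)-h(w_2)\|_{L^2(\Gamma)}\le K_M\,C_\Gamma'\,\|w_1-w_2\|_{H^{1+\epsilon}(\Gamma)},
\]
and the constant $K_MC_\Gamma'$ depends only on $R$. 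This is precisely local Lipschitz continuity.

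There is no genuinely hard step here: the argument hinges entirely on the fact that $1+\epsilon$ exceeds the critical exponent $n/2=1$ for the dimension $n=2$, which forces every function in $H^{1+\epsilon}_0(\Gamma)$ to be uniformly bounded and thereby allows a completely arbitrary $C^1$ nonlinearity — with no growth restriction — to act boundedly and Lipschitz-continuously. The only points requiring a modicum of care are that the $L^\infty$ bound be taken uniform over the bounded set in question (so that $K_M$ is independent of the chosen $w_1,w_2$), and that the $C^1$ boundary hypothesis is exactly what the Sobolev embedding into $C(\overline\Gamma)$ requires.
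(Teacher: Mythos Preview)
Your proof is correct and follows essentially the same approach as the paper: both use the Sobolev embedding $H^{1+\epsilon}_0(\Gamma)\hookrightarrow L^\infty(\Gamma)$ in two dimensions to get a uniform $L^\infty$ bound on the ball of radius $R$, then apply the mean value theorem with the bound $\max_{|s|\le CR}|h'(s)|$ to obtain the pointwise Lipschitz estimate, and finally integrate and use $L^2(\Gamma)\hookleftarrow H^{1+\epsilon}_0(\Gamma)$. Your write-up is slightly more detailed (you explicitly verify that the Nemytskii operator lands in $L^2(\Gamma)$ and note where the $C^1$ boundary hypothesis enters), but there is no substantive difference.
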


\begin{proof}
Assume $\|w_1\|_{H^{1+\epsilon}_0(\G)},  \|w_2\|_{H^{1+\epsilon}_0(\G)} \leq R$. 
Due to the imbedding $H^{1+\epsilon}_0(\Gamma) \hookrightarrow L^{\infty}(\G)$ in two dimensions,
we have $|w_1|_{\infty},  |w_2|_{\infty} \leq CR$. By the mean value theorem, we have 
\begin{align*}
|h(w_1) - h(w_2)|_2^2 
&= \int_{\Gamma} |h(w_1) - h(w_2)|^2 d\Gamma \notag\\
&\leq \int_{\Gamma} \Big[  \max_{|s| \leq C R}        |h'(s)| \Big]^2 |w_1 - w_2|^2 d\Gamma \notag\\
&\leq  C(R) |w_1-w_2|_2^2 \notag\\
&\leq  C(R) \|w_1-w_2\|_{H^{1+\epsilon}_0(\Gamma)}^2. 
\end{align*}
\end{proof}

\vspace{0.1 in}

\subsection{Well-posedness of Strong Solutions}

To state our results regarding the local and global well-posedness of strong solutions, we need to introduce the necessary framework to present \eqref{PDE} in operator-theoretic form. Such a setup is essential for applying the nonlinear semigroup technique.

First, we introduce the Dirichl\'et-Neumann Laplacian, given by: $ A=-\Delta:\D(A)\subset L^2(\O)\lra L^2(\O)$, with its domain $\D(A)=\{u\in H^2(\O):u|_{\G_0}=0,\,\, \partial_\nu u|_\G=0\}$.  The operator $A$ can be extended to a continuous map $A:H^1_{\G_0}(\O)\lra(H^1_{\G_0}(\O))'$, where:
\begin{align}\label{Adp}
\<Au,\phi\>=\int_\O\nabla u\cdot\nabla\phi \, dx=(\nabla u,\nabla\phi)_{\O},
\end{align}
for all  $u,\phi\in H^1_{\G_0}(\O)$.  

We next define the  Dirichl\'et-Neumann map: $R:H^s(\G)\lra H^{s+\frac{3}{2}}(\O) \cap H^1_{\G_0}(\O)$ where $s\geq0$ by: 
\begin{align}\label{2.1}
q=Rp\iff q  \text{  is the weak solution of the problem }
\begin{cases}\Delta q=0\text{ in }\O,
\\q=0\text{ on }\G_0, \\
\partial_\nu q=p\text{ on }\G.
\end{cases}  
\end{align}
It is well known (see, for instance Lasiecka and Triggiani \cite{LT3,LT2}) that $R$ is continuous from $H^s(\G)$ to $H^{s+\frac{3}{2}}(\O) \cap H^1_{\G_0}(\O)$, for $s\geq0$.
Let us note here that (\ref{2.1}) and standard computation yield the following useful identity:
\begin{align}\label{PDE1}
\<ARp,\phi\>=(\nabla Rp, \nabla\phi)_\O= (p,\g\phi)_\G,
\end{align}
for all $p\in L^2(\G)$ and $\phi\in H^1_{\G_0}(\O)$.

Additionally, the biharmonic operator $\Delta^2:\D(\Delta^2)\subset L^2(\G)\lra L^2(\G)$ with its domain
$\D(\Delta^2)=H^4(\G)\cap H^2_0(\G)$ can be extended as a continuous mapping from $H^2_0(\G)$ to $H^{-2}(\G)$, where
\begin{align}\label{Bdp}
\<\Delta^2 w,\phi\>=(\Delta w,\Delta\phi)_\G,
\end{align}
for all $w, \phi\in H^2_0(\G)$.  

By using the operators defined above, system \eqref{PDE} can be formally cast as:
\begin{align}\label{PDE2}
\begin{cases}
u_{tt}+A(u-Rw_t)=0 &\text{ in } \O \times (0,T),\\[2mm]
w_{tt}+\Delta^2w+\g u_t=h(w)&\text{ in }\G\times(0,T),\\[2mm]
(u(0),u_t(0))=(u_0,u_1)\in H^1_{\G_0}(\O)\times L^2(\O),\\[2mm]
(w(0),w_t(0))=(w_0,w_1)\in H^2_0(\G)\times L^2(\G).
\end{cases}
\end{align}
Note that the equation \( \partial_\nu u = w_t \) on \( \Gamma \) in system (\ref{PDE}) has been incorporated into the term \( A(u - Rw_t) \).

Next, we introduce the space $H=H^1_{\G_0}(\O)\times H^2_0(\G)\times L^2(\O)\times L^2(\G)$ with the natural norm
\[
\|U\|_H^2= \norm{\grad u}_2^2 + \abs{\Delta w}_2^2 + \norm{y}_2^2 + \abs{z}_2^2,   \text{ for all } U=(u,w,y,z) \in H,
\]
and define the \emph{nonlinear} operator
\begin{align*}
\mathscr{A}:\D(\mathscr{A})\subset H\lra H
\end{align*}
by
\begin{align}\label{PDE3}
\mathscr{A}\begin{bmatrix}u\\w\\y\\z\end{bmatrix}^{tr}=\begin{bmatrix}-y\\-z\\A(u-Rz)\\\Delta^2w+\g y-h(w)\end{bmatrix}^{tr}
\end{align}
where
\begin{align*}
\D(\mathscr{A})=\Big\{(u,w,y,z)  \in & \left(H^1_{\G_0}(\O)  \times H^2_0(\G)\right)^2 :A(u-Rz)\in L^2(\O), \; \Delta^2w\in L^2(\G)\Big\}.
\end{align*}
Observe that the nonlinear term $h(w) \in L^2(\Gamma)$ because $w\in H^2_0(\Gamma) \hookrightarrow L^{\infty}(\Gamma)$ and $h\in C^1(\mathbb R)$.

With $U = (u, w, u_t, w_t)$, system~\eqref{PDE2} can be written in the operator-theoretic form:
\begin{align}\label{PDE4}
    U_t + \mathscr{A} U = 0, 
    \quad U(0)=U_0 =(u_0, w_0, u_1, w_1) \in H.
\end{align}

\begin{thm}\label{thm:strong} {\bf  (Local and global well-posedness of strong solutions)} Assume $h \in C^1(\mathbb R)$ and $U_0\in\D(\mathscr{A})$. Then equation \eqref{PDE4} has a unique local strong solution $U\in W^{1,\infty}(0,T_0;H)$ for some $T_0>0$, satisfying $U(t)\in\D(\mathscr{A})$ for all $t\in [0,T_0]$, where $T_0>0$ depends on the initial quadratic energy $E(0)$, where the quadratic energy is defined as
	\begin{align*}E(t)&:=\frac{1}{2}\left(\|u_t(t)\|_2^2+\|\nabla u(t)\|_2^2+|w_t(t)|_2^2+|\Delta w(t)|_2^2\right).
	\end{align*}
Furthermore, the following energy identity holds for all $t\in[0,T_0]$:
	\begin{align}\label{energy}
	E(t)=E(0)+\int_0^t\int_\G h(w)w_td\G d\t.
	\end{align}
If in addition, we assume $|h(s)|\leq c(|s|+1)$ for all $s\in \mathbb R$ where $c>0$ is a constant, then the local strong solution can be extended to a global strong solution and $T_0$ can be taken arbitrarily large.
\end{thm}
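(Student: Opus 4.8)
The plan is to recast \eqref{PDE4} as a monotone evolution equation perturbed by a locally Lipschitz lower-order term and then invoke the nonlinear semigroup theory of \cite{Barbu2,Sh}. I would split $\mathscr{A} = \mathscr{A}_1 + \mathscr{B}$, where $\mathscr{A}_1$ is obtained from \eqref{PDE3} by dropping the source, i.e. $\mathscr{A}_1(u,w,y,z) = (-y,\,-z,\,A(u-Rz),\,\Delta^2 w+\g y)$ on $\D(\mathscr{A}_1):=\D(\mathscr{A})$, and $\mathscr{B}(u,w,y,z) = (0,0,0,-h(w))$; note $\mathscr{B}$ does map $H$ into $H$ because $w\in H^2_0(\G)\hookrightarrow L^\infty(\G)$ and $h\in C^1(\R)$ force $h(w)\in L^2(\G)$. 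First I would check that $\mathscr{A}_1$ is monotone: for $U=(u,w,y,z)\in\D(\mathscr{A})$, expanding $(\mathscr{A}_1 U,U)_H$ by means of \eqref{Adp}, \eqref{PDE1}, \eqref{Bdp}, the terms $-(\grad y,\grad u)_\O$ and $(\grad(u-Rz),\grad y)_\O=(\grad u,\grad y)_\O-(z,\g y)_\G$ combine with $-(\Delta z,\Delta w)_\G$, $(\Delta w,\Delta z)_\G$ and $(\g y,z)_\G$ so that everything cancels, giving $(\mathscr{A}_1 U,U)_H=0$.

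The key step is to prove that $\mathscr{A}_1$ is maximal, i.e. $R(I+\mathscr{A}_1)=H$. Given $F=(f_1,f_2,f_3,f_4)\in H$, the equation $U+\mathscr{A}_1 U=F$ forces $y=u-f_1$, $z=w-f_2$, and reduces to a coupled stationary elliptic system for $(u,w)\in V:=H^1_{\G_0}(\O)\times H^2_0(\G)$ whose weak formulation involves the bilinear form
\begin{align*}
a((u,w),(\p,\psi))=(u,\p)_\O+(\grad u,\grad\p)_\O-(w,\g\p)_\G+(w,\psi)_\G+(\Delta w,\Delta\psi)_\G+(\g u,\psi)_\G,
\end{align*}
where \eqref{PDE1} is used to write $\<ARw,\p\>=(w,\g\p)_\G$. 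On the diagonal the two trace terms cancel, so $a((u,w),(u,w))=\|u\|_2^2+\|\grad u\|_2^2+|w|_2^2+|\Delta w|_2^2$, i.e. $a$ is coercive on $V$; boundedness of $a$ follows from the trace embedding $H^1_{\G_0}(\O)\hookrightarrow L^2(\G)$ and Poincar\'e's inequality on $\G$. Lax--Milgram then yields a unique $(u,w)\in V$, and reading off $A(u-Rz)=f_3-y\in L^2(\O)$ and $\Delta^2 w=f_4-z-\g y\in L^2(\G)$ shows $U=(u,w,y,z)\in\D(\mathscr{A})$; hence $\mathscr{A}_1$ is maximal monotone.

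Next I would verify that $\mathscr{B}:H\to H$ is locally Lipschitz and bounded on bounded sets: by Proposition \ref{hprop} (with, say, $\e=1$) together with the embedding $H^2_0(\G)\hookrightarrow H^{1+\e}_0(\G)$, the Nemytskii map $w\mapsto h(w)$ is locally Lipschitz from $H^2_0(\G)$ into $L^2(\G)$. The standard theory for an $m$-accretive operator perturbed by a locally Lipschitz map then gives a unique strong solution $U\in W^{1,\infty}(0,T_0;H)$ with $U(t)\in\D(\mathscr{A})$, where $T_0>0$ may be chosen depending only on $\|U_0\|_H^2=2E(0)$ and the Lipschitz constant of $\mathscr{B}$ on a ball about $U_0$, together with the continuation criterion that the solution extends as long as $\|U(t)\|_H$ remains bounded. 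Since for this solution $U_t+\mathscr{A}U=0$ holds in $H$ for a.e.\ $t$, pairing with $U$ in $H$, using $(\mathscr{A}_1 U,U)_H=0$ and the absolute continuity of $t\mapsto\|U(t)\|_H^2=2E(t)$, gives $\frac{d}{dt}E(t)=(h(w),w_t)_\G$, which integrates to \eqref{energy}.

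Finally, for global existence under $|h(s)|\le c(|s|+1)$, I would close an energy estimate: from \eqref{energy}, Poincar\'e's inequality $|w|_2\le C|\Delta w|_2\le C\sqrt{E(t)}$, and $|w_t|_2\le\sqrt{2E(t)}$,
\begin{align*}
\Big|\int_\G h(w)w_t\,d\G\Big|\le c\big(|w|_2+|\G|^{1/2}\big)\,|w_t|_2\le C\big(E(t)+1\big),
\end{align*}
so $E(t)\le E(0)+C\int_0^t(E(\t)+1)\,d\t$, and Gronwall's inequality bounds $E(t)$, hence $\|U(t)\|_H$, on every bounded interval; the continuation criterion then forces the maximal existence time to be $+\infty$, so $T_0$ may be taken arbitrarily large. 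The main obstacle I expect is the maximality step: one must use the Dirichlet--Neumann identity \eqref{PDE1} precisely so that the two trace contributions in $a$ cancel on the diagonal, which is what renders the stationary coupled elliptic system coercive. This is the same cancellation underlying $(\mathscr{A}_1 U,U)_H=0$, and it reflects the exact matching between the Neumann condition $\partial_\nu u=w_t$ and the coupling term $u_t|_\G$ in the plate equation; once this structure is in hand, the remaining arguments are routine.
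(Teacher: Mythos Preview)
Your proposal is correct and reaches the same conclusion, but it is organized differently from the paper's proof. The paper keeps the source $h$ inside the full nonlinear operator $\mathscr{A}$ and proceeds in two stages: first it shows that when $h:H^2_0(\G)\to L^2(\G)$ is \emph{globally} Lipschitz, $\mathscr{A}+\omega I$ is $m$-accretive (surjectivity is obtained by showing a nonlinear map $\mathscr{B}:V\to V'$ is maximal monotone and coercive via Barbu's theory, not by Lax--Milgram); then, for the merely locally Lipschitz case, it introduces a radial truncation $h^K$, solves the $(K)$-problem globally, and uses an energy argument with Gronwall to show the truncated solution stays inside the ball of radius $K$ on a time interval $[0,T_0]$ depending explicitly on $K$ (hence on $E(0)$). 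Your route instead splits off $h$ as a locally Lipschitz perturbation $\mathscr{B}$ of the linear skew operator $\mathscr{A}_1$, proves maximality of $\mathscr{A}_1$ by the linear Lax--Milgram theorem (exploiting the same trace cancellation), and then invokes the abstract local-existence/continuation theorem for $m$-accretive operators perturbed by locally Lipschitz maps.

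Both arguments hinge on the identical structural cancellation you highlight, and both close global existence by the same Gronwall estimate. What you gain is a cleaner linear core (Lax--Milgram instead of a nonlinear surjectivity lemma) and no need for the truncation step; what the paper gains is an explicit formula \eqref{LLS9} for $T_0$ in terms of the truncation radius $K$, which it later reuses verbatim when iterating the extension procedure for global strong solutions and for the potential-well solutions in Theorems~\ref{thm:wellsol} and \ref{thm:wellsol2}. If you follow your route, you should make sure the continuation criterion you cite is stated precisely enough to give the same iterative extension.
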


\smallskip

\begin{rmk}
Theorem \ref{thm:strong} establishes the well-posedness of strong solutions $U$ to equation (\ref{PDE4}). 
Since system (\ref{PDE}) can be reformulated in the abstract form (\ref{PDE4}), $U$ is also a strong solution to system (\ref{PDE}). 
Therefore, Theorem \ref{thm:strong} establishes the well-posedness of strong solutions to system (\ref{PDE}) as well. 
\end{rmk}

\vspace{0.1 in}

\subsection{Well-posedness of Weak Solutions}
To present our results on local and global well-posedness of weak solutions, we first introduce the definition of a suitable weak solution for \eqref{PDE}.
\begin{defn}\label{def:weaksln}
A pair of functions $(u,w)$ is said to be a \emph{weak solution} of \eqref{PDE} on the interval $[0,T]$ provided:
	\begin{enumerate}[(i)]
		\setlength{\itemsep}{5pt}
		\item\label{def-u} $u\in C([0,T];H^1_{\G_0}(\O))$, $u_t\in C([0,T];L^2(\O))$,
		\item\label{def-v} $w\in C([0,T];H^2_0(\G))$, $w_t\in C([0,T];L^2(\G))$,
		\item\label{def-uic} $(u(0),u_t(0))=(u_0,u_1) \in H^1_{\G_0}(\O)\times L^2(\O)$,
		\item\label{def-wic} $(w(0),w_t(0))=(w_0,w_1) \in H^2_0(\G)\times L^2(\G)$,
		\item\label{def-ws} The functions $u$ and $w$ satisfy the following variational  identities 
		for all $t\in[0,T]$: 
\begin{align}\label{wkslnwave}
(u_{t}(t),  \phi(t))_\O & - (u_1,\phi(0))_\O-\int_0^t ( u_t(\tau), \phi_t(\tau) )_\O d\tau
+\int_0^t (\nabla u(\tau), \nabla\phi(\tau) )_\O d\tau \notag \\
&-\int_0^t  (w_t(\tau), \g\phi(\tau) )_\G d\tau=0,
\end{align}
and
\begin{align}\label{wkslnplt}
(w_t(t) & + \g u(t),\psi(t) )_\G  -(w_1 +\g u(0) ,\psi(0))_\G -\int_0^t (w_t(\tau), \psi_t(\tau) )_\G d\tau \notag \\
& -\int_0^t (\g u(\tau), \psi_t(\tau) )_\G d\tau+\int_0^t (\Delta w(\tau), \Delta\psi(\tau) )_\G d\tau \notag \\
&=\int_0^t\int_{\G}h(w(\tau))\psi(\tau) d\G d\tau,
\end{align}
for all test functions $\phi$ and $\psi$ satisfying:
$\phi\in C([0,T];H^1_{\G_0}(\O))$, \\$\psi\in C\left([0,T];H^2_0(\G)\right)$  with 
$\phi_t\in L^1(0,T;L^2(\O))$, and $\psi_t\in L^{1}(0,T;L^2(\G))$.
	\end{enumerate}
\end{defn}

The next theorem states the existence and uniqueness of local weak solutions to \eqref{PDE}, as well as their continuous dependence on the initial data. Moreover, under an additional assumption on the source term, the local weak solution can be extended to a global weak solution for all time.

\begin{thm}\label{thm:localexist} {\bf (Local and global well-posedness of weak solutions)}
	Assume $h\in C^1(\mathbb R)$. Let $U_0=(u_0,w_0,u_1,w_1)\in H = H^1_{\G_0}(\O)\times H^2_0(\G)\times L^2(\O)\times L^2(\G)$.     
    Then problem \eqref{PDE} possesses a unique local weak solution $(u,w)$ in the sense of Definition \ref{def:weaksln} on $[0,T_0]$, where $T_0>0$ depends on the initial quadratic energy $E(0)$. 
	In addition, the energy identity \eqref{energy} holds for all $t\in[0,T_0]$. Furthermore, the weak solutions depend continuously on initial data, namely, if $U^n_0=(u^n_0,w^n_0,u^n_1,w^n_1) \in H$ is a sequence of initial data such that $U^n_0\lra U_0$ in $H$ as $n\lra\infty$, then the corresponding weak solutions $(u^n,w^n)$ and $(u,w)$ of \eqref{PDE} satisfy:
\begin{align*}
(u^n,w^n,u^n_t,w^n_t)\lra(u,w,u_t,w_t)\text{ in }C([0,T_0];H),\text{ as }n\lra\infty,
\end{align*}
where $T_0>0$ can be chosen independent of $n\in\N$. If in addition, we assume $|h(s)|\leq c(|s|+1)$ for all $s\in \mathbb R$ where $c>0$ is a constant, then the local weak solution is indeed a global weak solution and $T_0$ can be taken arbitrarily large.\end{thm}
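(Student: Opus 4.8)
The plan is to obtain the weak solution as a strong limit in $C([0,T_0];H)$ of the strong solutions furnished by Theorem~\ref{thm:strong}, and to run the central estimate at the level of differences, so that existence, the energy identity, uniqueness, and continuous dependence all follow from a single Gronwall argument; the global statement then follows from the energy identity together with the linear bound on $h$. Concretely, since $\mathcal{D}(\mathscr{A})$ is dense in $H$, I choose $U_0^n=(u_0^n,w_0^n,u_1^n,w_1^n)\in\mathcal{D}(\mathscr{A})$ with $U_0^n\to U_0$ in $H$ and let $U^n=(u^n,w^n,u^n_t,w^n_t)$ be the corresponding strong solutions from Theorem~\ref{thm:strong}. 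Because the existence time there depends only on the initial energy and $E^n(0)\to E(0)$, all $U^n$ are defined on a common interval $[0,T_0]$. Using the energy identity \eqref{energy}, the embedding $H^2_0(\G)\hookrightarrow L^\infty(\G)$ in two dimensions, and $h\in C^1$, one bounds $\big|\int_\G h(w^n)w^n_t\,d\G\big|\le G(E^n(t))$ for a fixed continuous nondecreasing $G$; comparison with the scalar ODE $\rho'=G(\rho)$ then shows that, after shrinking $T_0$ if necessary, $E^n(t)\le R$ for all $t\in[0,T_0]$ and all $n$, for some fixed $R$. In particular the $L^\infty(\G)$-norms of the $w^n$ are uniformly bounded on $[0,T_0]$.

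Next I estimate the difference of two of these strong solutions. Writing $\bar u=u^n-u^m$ and $\bar w=w^n-w^m$, testing the wave part of \eqref{PDE2} for the difference with $\bar u_t$ and the plate part with $\bar w_t$ (admissible since these are strong solutions) and invoking the identities \eqref{Adp} and \eqref{PDE1}, the coupling terms $(\bar w_t,\g\bar u_t)_\G$ produced by $A(\,\cdot\,-R\,\cdot\,)$ and by $\g u_t$ occur with opposite signs and cancel, leaving
\begin{align*}
\tfrac12\tfrac{d}{dt}\big(\|\bar u_t\|_2^2+\|\grad\bar u\|_2^2+|\bar w_t|_2^2+|\Delta\bar w|_2^2\big)=\int_\G\big(h(w^n)-h(w^m)\big)\bar w_t\,d\G.
\end{align*}
By the mean value theorem with the uniform $L^\infty$ bound of the previous paragraph (or, equivalently, by Proposition~\ref{hprop} together with $H^2_0(\G)\hookrightarrow H^{1+\epsilon}_0(\G)$), the right-hand side is $\le C(R)\|U^n(t)-U^m(t)\|_H^2$, and Gronwall yields $\|U^n(t)-U^m(t)\|_H\le e^{C(R)T_0}\|U_0^n-U_0^m\|_H$ on $[0,T_0]$. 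Thus $(U^n)$ is Cauchy in $C([0,T_0];H)$; its limit $U=(u,w,u_t,w_t)$ has exactly the regularity demanded in Definition~\ref{def:weaksln}, and $h(w^n)\to h(w)$ in $C([0,T_0];L^2(\G))$ by the same bound. Since each strong solution satisfies \eqref{wkslnwave}--\eqref{wkslnplt} — obtained by multiplying \eqref{PDE2} by the test functions, integrating over $\O$ or $\G$ and over $(0,t)$, and integrating by parts via \eqref{Adp}, \eqref{PDE1}, \eqref{Bdp} — passing to the limit shows $(u,w)$ is a weak solution, and passing to the limit in \eqref{energy} (every term converges, the convergence being strong in $C([0,T_0];H)$) gives the energy identity for $(u,w)$. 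The continuous-dependence statement is then the same Gronwall estimate, now valid between the constructed weak solutions associated to two converging data sequences (with $R$ a common energy bound on a common $T_0$, which therefore does not depend on $n$).

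For uniqueness, let $(u,w)$ and $(\tilde u,\tilde w)$ be two weak solutions with the same data and set $\bar u=u-\tilde u$, $\bar w=w-\tilde w$. Since $\bar u_t,\bar w_t$ are merely $L^2$-valued they cannot be used as test functions, so for fixed $s\in(0,T_0]$ I use the admissible test functions $\phi(t)=\int_t^s\bar u(\tau)\,d\tau$ and $\psi(t)=\int_t^s\bar w(\tau)\,d\tau$ on $[0,s]$, extended by zero. Substituting $\phi$ into \eqref{wkslnwave} for $\bar u$ and $\psi$ into \eqref{wkslnplt} for $\bar w$, integrating by parts in $t$, and adding, the boundary coupling terms $\int_0^s(\g\bar u,\bar w)_\G\,d\tau$ arising from the two equations cancel (on the wave side after a Fubini interchange), leaving
\begin{align*}
\tfrac12\|\bar u(s)\|_2^2 &+\tfrac12\Big\|\grad\!\int_0^s\!\bar u\,d\tau\Big\|_2^2+\tfrac12|\bar w(s)|_2^2+\tfrac12\Big|\Delta\!\int_0^s\!\bar w\,d\tau\Big|_2^2 \\
&=\int_0^s\!\!\int_\G\big(h(w)-h(\tilde w)\big)\,\psi\,d\G\,d\tau.
\end{align*}
The mean value theorem, together with the a priori $L^\infty(\G)$-bounds on $w$ and $\tilde w$ (which lie in $C([0,T_0];H^2_0(\G))\hookrightarrow C([0,T_0];L^\infty(\G))$), gives $|h(w)-h(\tilde w)|_2\le C|\bar w|_2$ pointwise in $t$, while $|\psi(\tau)|_2\le\int_0^s|\bar w|_2\le s^{1/2}\big(\int_0^s|\bar w|_2^2\big)^{1/2}$, so the right-hand side is $\le Cs\int_0^s|\bar w(\tau)|_2^2\,d\tau\le Cs\int_0^s\big(\|\bar u(\tau)\|_2^2+|\bar w(\tau)|_2^2\big)\,d\tau$; Gronwall then forces $\bar u\equiv\bar w\equiv0$ on $[0,T_0]$. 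I expect this to be the step requiring the most care — specifically, carrying the boundary coupling terms through the time-integrated identity and verifying that they cancel.

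Finally, assume $|h(s)|\le c(|s|+1)$ for all $s$. Then $\big|\int_\G h(w)w_t\,d\G\big|\le c(|w|_2+|\G|^{1/2})|w_t|_2\le C(E(t)+1)$, so the energy identity \eqref{energy} gives $E(t)\le E(0)+C\int_0^t(E(\tau)+1)\,d\tau$, and Gronwall yields $E(t)\le(E(0)+CT)e^{CT}$ on every interval $[0,T]$. Hence the quadratic energy cannot blow up in finite time; combining this a priori bound with the local existence and uniqueness already established, a standard continuation argument — restarting at $t=T_0$ from $(u(T_0),w(T_0),u_t(T_0),w_t(T_0))\in H$, whose energy is controlled by the bound above, so that the step length stays bounded below — extends the weak solution to $[0,\infty)$, and $T_0$ may be taken arbitrarily large.
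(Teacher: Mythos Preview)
Your existence argument is essentially the paper's: approximate $U_0$ by data in $\mathcal{D}(\mathscr{A})$, take the strong solutions from Theorem~\ref{thm:strong}, obtain a uniform energy bound on a common $[0,T_0]$, and show the sequence is Cauchy in $C([0,T_0];H)$ by testing the difference system with $(\bar u_t,\bar w_t)$ and using the local Lipschitz bound on $h$. The paper additionally invokes Aubin--Lions to identify the limit before proving the Cauchy property, but this is a cosmetic difference.

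Where you genuinely diverge is in the energy identity and in uniqueness/continuous dependence. The paper introduces the symmetric difference quotient $D_h$ (Propositions~\ref{prop:DQ1}--\ref{prop:DQ2}) and uses $\phi=D_hu$, $\psi=D_hw$ as test functions to derive \eqref{energy} directly at the weak level; the same tool, applied to the difference of two weak solutions, produces a \emph{full} difference energy identity (their \eqref{CDID6}), from which both uniqueness and continuous dependence in $C([0,T_0];H)$ follow by a single Gronwall. You instead obtain \eqref{energy} by passing to the limit from the strong solutions, and prove uniqueness with the classical integrated test functions $\phi(t)=\int_t^s\bar u$, $\psi(t)=\int_t^s\bar w$. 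That argument is correct (the coupling terms do cancel after the Fubini rearrangement you describe), but it only controls $\|\bar u(s)\|_2$ and $|\bar w(s)|_2$, not the full $H$-norm; it suffices for uniqueness but not directly for the continuous-dependence statement in $C([0,T_0];H)$. For the latter you implicitly rely on passing the strong-solution Lipschitz estimate $\|U^n(t)-U^m(t)\|_H\le e^{C(R)T_0}\|U^n_0-U^m_0\|_H$ through the limit, which works once uniqueness is in hand, but you should make that step explicit. The paper's difference-quotient approach is more unified and yields the $H$-norm estimate directly at the weak level; your route is slightly more elementary but splits the argument into separate tools for uniqueness and for continuous dependence. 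The global-existence part is identical in both treatments.
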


\vspace{0.1 in}

\subsection{Potential Well Solutions}
Before our final results can be presented, some preliminary work pertaining to the potential well must be done.  First, we place additional assumptions on $h$.

\begin{assumption}\label{assumption3} Let $h\in C^1(\mathbb R)$ satisfy:
\begin{enumerate}
\item there exists $H(s)$ and $\theta>2$ such that $H'(s)=h(s)$ and $0<\theta H(s)<h(s)s$ for all $s\in \mathbb{R}\setminus\{0\}$,
\item $|h(s)|\leq h(|s|)$,
\item $h(s)$ is non-decreasing on $(0,\infty)$,
\item $\frac{h(s)}{s}\rightarrow 0^+$ as $s\rightarrow 0^+$.
\end{enumerate}
\end{assumption}

\begin{rmk}
Condition (1) is the well-known Ambrosetti-Rabinowitz condition often used in elliptic problems. 
Note that letting $s\rightarrow 0$ in the inequality $0<\theta H(s)<h(s)s$ implies
$H(0) = 0$. Also, by condition (4), we see that $h(0)=0$. Thus, both $H$ and $h$ pass through the origin. An example of a function $h(s)$ satisfying Assumption \ref{assumption3} is $ e^{|s|^q}|s|^p s$, where $p, q>0$. Another example is $ e^{e^{|s|}}|s|^p s$ where $p>0$.
\end{rmk}

Define $V=H^1_{\G_0}(\Omega)\times H^2_0(\G)$ and endow it with the natural norm given by
\begin{align}\label{Xnorm}
\|(u,w)\|^2_V=\|\nabla u\|_2^2+|\Delta w|_2^2.
\end{align}
Next, let $H(s)$ be as in Assumption \ref{assumption3}, and define the nonlinear functional $\mathscr{J}:V\rightarrow\mathbb{R}$ by
\begin{align}\label{potentialenergy}
\mathscr{J}(u,w)=\frac{1}{2}\left(\|\nabla u\|_2^2+|\Delta w|_2^2\right)-\int_{\Gamma} H(w)d\G.
\end{align}
Note that the potential energy of \eqref{PDE} at time $t$ is given by $\mathscr{J}(u(t),w(t))$.  The Frechet derivative of $\mathscr{J}$ at $(u,v)\in V$ is given by
\begin{align}\label{Jprime}
\left\langle \mathscr{J}'(u,w),(\phi,\psi)\right\rangle=\int_\Omega \nabla u\cdot\nabla \phi dx +\int_\G\Delta w\Delta\psi d\Gamma-\int_\G h(w)\psi d\G,
\end{align}
for all $(\phi,\psi)\in V$.  
The Nehari manifold associated with the functional $\mathscr{J}$ is defined as
\begin{align}\label{nehari}
\mathscr{N}:=\left\{(u,w)\in V\setminus\{(0,0)\}:\left\langle \mathscr{J}'(u,w),(u,w)\right\rangle=0\right\}.
\end{align}
Combining \eqref{Jprime} and \eqref{nehari} allows us to write
\begin{align}\label{nehari2}
\mathscr{N}=\left\{(u,w)\in V\setminus\{(0,0)\}:\|\nabla u\|_2^2+|\Delta w|_2^2=\int_\G h(w)wd\G\right\}.
\end{align}
Next, define the potential well associated with $\mathscr{J}(u,w)$ by 
\begin{align}\label{potentialwell}
\mathscr{W}:=\left\{(u,w)\in V:\mathscr{J}(u,w)<d\right\},
\end{align}
where $d$ is the depth of the potential well $\mathscr{W}$ and taken to be 
\begin{align}\label{depth}
d:=\inf_{(u,w)\in\mathscr{N}}\mathscr{J}(u,w).
\end{align}
We must first verify that $d$ is strictly positive in order to ensure $\mathscr{W}$ is non-empty.
\begin{prop}\label{positived} Let $h$ satisfy Assumption \ref{assumption3}.  Then $d>0$.
\end{prop}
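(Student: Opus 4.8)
The plan is to show $d>0$ by establishing a uniform positive lower bound on $\mathscr{J}(u,w)$ over the Nehari manifold $\mathscr{N}$. The key is first to obtain a quantitative separation of $\mathscr{N}$ from the origin: there exists $\rho>0$ such that $\|(u,w)\|_V \geq \rho$ for every $(u,w)\in\mathscr{N}$. To see this, fix $(u,w)\in\mathscr{N}$, so that $\|\nabla u\|_2^2 + |\Delta w|_2^2 = \int_\G h(w)w\,d\G$. I would bound the right-hand side using condition (4) of Assumption \ref{assumption3}, namely $h(s)/s\to 0^+$ as $s\to 0^+$, together with conditions (2) and (3): given $\varepsilon>0$, there is $\delta>0$ with $h(s)s \leq \varepsilon s^2$ for $|s|\leq\delta$, and for $|s|>\delta$ one controls $h(s)s$ by a higher power, say $\leq C_\varepsilon |s|^{p+2}$ for some $p>0$ (using that $h$ is $C^1$ and monotone, so has at most some polynomial-free but locally controlled growth — more carefully, on any bounded set $h(s)s \leq C|s|^2$, and since we only need an estimate valid near the origin it suffices to combine the $\varepsilon s^2$ piece with a $\|w\|_{H^2_0(\G)}$-controlled remainder via Sobolev embedding $H^2_0(\G)\hookrightarrow L^q(\G)$ for all $q<\infty$ in two dimensions). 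This yields $\int_\G h(w)w\,d\G \leq \varepsilon |w|_2^2 + C_\varepsilon |w|_{q}^{q}$ for suitable $q>2$, and then $|w|_2^2 \leq C\|\Delta w\|_2^2$, $|w|_q^q \leq C\|\Delta w\|_2^q$ by Sobolev embedding and Poincaré. Combining, $\|(u,w)\|_V^2 \leq C\varepsilon \|(u,w)\|_V^2 + C_\varepsilon \|(u,w)\|_V^q$; choosing $\varepsilon$ small and dividing, $1 \leq C_\varepsilon \|(u,w)\|_V^{q-2}$, which forces $\|(u,w)\|_V \geq \rho := (1/C_\varepsilon)^{1/(q-2)} > 0$.

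Next I would use the Ambrosetti–Rabinowitz condition (1) to turn the Nehari constraint into a lower bound on $\mathscr{J}$. On $\mathscr{N}$ we have $\int_\G h(w)w\,d\G = \|(u,w)\|_V^2$, and condition (1) gives $\int_\G H(w)\,d\G \leq \frac{1}{\theta}\int_\G h(w)w\,d\G = \frac{1}{\theta}\|(u,w)\|_V^2$. Therefore, for $(u,w)\in\mathscr{N}$,
\begin{align*}
\mathscr{J}(u,w) = \frac{1}{2}\|(u,w)\|_V^2 - \int_\G H(w)\,d\G \geq \left(\frac{1}{2} - \frac{1}{\theta}\right)\|(u,w)\|_V^2 \geq \left(\frac{1}{2} - \frac{1}{\theta}\right)\rho^2.
\end{align*}
Since $\theta>2$, the constant $\frac{1}{2}-\frac{1}{\theta}$ is strictly positive, and taking the infimum over $\mathscr{N}$ gives $d \geq \left(\frac{1}{2}-\frac{1}{\theta}\right)\rho^2 > 0$, as desired.

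The main obstacle I anticipate is making the estimate on $\int_\G h(w)w\,d\G$ near the origin fully rigorous without assuming polynomial growth of $h$ — the point being that condition (4) only controls $h(s)/s$ as $s\to 0^+$, so I must split the integral according to whether $|w(x)|$ is small or not, and on the set where $|w(x)|$ is not small I need a bound of the form $h(s)s \leq C_\varepsilon g(s)$ where $g$ is dominated by some fixed power $|s|^q$ with $q<\infty$. Here I would invoke that $h$ is $C^1$ hence bounded on compact sets, but since $w\in H^2_0(\G)$ is only known to be bounded once we know $\|(u,w)\|_V$ is bounded — and a priori elements of $\mathscr{N}$ need not lie in a bounded set — one must be slightly careful. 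The clean resolution is: the separation argument only needs an upper bound of the form $\int_\G h(w)w\,d\G \leq \varepsilon\|(u,w)\|_V^2 + C_\varepsilon \|(u,w)\|_V^{q}$ valid for \emph{all} $(u,w)\in V$, which follows by writing $\int_\G h(w)w = \int_{\{|w|\le 1\}} + \int_{\{|w|>1\}}$, bounding the first integral by $\varepsilon|w|_2^2$ for $\varepsilon$ arbitrarily small once $|w|\le\delta\le 1$ (shrink the threshold, not the set — or absorb the $\delta<|w|\le 1$ part into the second term), and bounding the second by $\big(\max_{|s|\le 1}|h(s)/s|\big)$-free reasoning is not available, so instead use $\int_{\{|w|>1\}} h(w)w \le \int_{\{|w|>1\}} |h(w)|\,|w|$ and the fact that, by condition (2) and monotonicity (3), $|h(s)|\le h(|s|)$ and for $|s|>1$, $h(|s|)\le h(|s|)|s|\le h(1)^{-1}\cdot(\text{something})$ — the honest route is simply that $H^2_0(\G)\hookrightarrow C(\bar\G)$ in two dimensions, so on $\mathscr{N}$ with $\|(u,w)\|_V$ possibly large we instead argue that either $\|(u,w)\|_V\ge 1$ already (done), or $\|(u,w)\|_V< 1$ in which case $|w|_\infty \le C$ uniformly and $\max_{|s|\le C|w|_\infty}|h'(s)|$ is a fixed constant, giving $h(w)w \le |h'|_{L^\infty[-C,C]}\,|w|^2 \le$ (small)$\,|w|^2$ only if we also exploit (4) — so one localizes further: for the subcritical regime use (4) to get $h(w)w\le\varepsilon|w|^2$ pointwise when $|w|\le\delta$, and when $\delta<|w|\le C$ bound $h(w)w\le C'|w|^2\le C'\delta^{-q+2}|w|^q$. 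This dichotomy circumvents any need for global growth hypotheses and is where the careful bookkeeping lies; everything else is routine once that estimate is in hand.
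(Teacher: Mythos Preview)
Your overall architecture---show that $\mathscr{N}$ is uniformly bounded away from the origin in $V$, then combine with the Ambrosetti--Rabinowitz inequality $\mathscr{J}(u,w)\ge(\tfrac12-\tfrac1\theta)\|(u,w)\|_V^2$---is exactly what the paper does. The difference lies entirely in how you establish the separation $\|(u,w)\|_V\ge\rho$.

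The paper's argument is much shorter than your $L^q$-splitting route. Using conditions (2)--(3) and the embedding $H^2_0(\Gamma)\hookrightarrow L^\infty(\Gamma)$, one bounds the whole integral in a single stroke:
\[
\|(u,w)\|_V^2=\int_\Gamma h(w)w\,d\Gamma\le |\Gamma|\,h(|w|_\infty)\,|w|_\infty\le |\Gamma|\,h\bigl(C\|(u,w)\|_V\bigr)\cdot C\|(u,w)\|_V,
\]
whence $\dfrac{h(C\|(u,w)\|_V)}{C\|(u,w)\|_V}\ge \dfrac{1}{C^2|\Gamma|}$, and condition (4) immediately forces $\|(u,w)\|_V\ge c_0>0$. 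No $\varepsilon$-$\delta$ splitting, no $L^q$ estimates, no dichotomy on the size of $\|(u,w)\|_V$ are needed. The $L^\infty$ embedding converts the integral bound into a \emph{single pointwise evaluation} of $h$, which is precisely what allows condition (4) to be invoked directly.

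Your eventual fix (the dichotomy: either $\|(u,w)\|_V\ge 1$, or $|w|_\infty\le C$ and then split pointwise between $|w|\le\delta$ and $\delta<|w|\le C$) is correct and yields the same conclusion, but it is a detour. Your first attempt---bounding $h(s)s\le C_\varepsilon|s|^{p+2}$ for $|s|>\delta$---does fail in general since $h$ may grow super-polynomially (the paper's own examples include $e^{|s|^q}|s|^p s$); you correctly diagnosed this and repaired it. The moral is that in two dimensions with $H^2_0$, the $L^\infty$ embedding is the right tool and makes the growth of $h$ at infinity irrelevant for this step.
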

\begin{proof}
Let $(u,w)\in\mathscr{N}$. We claim $w\not =0$. Indeed, if $w=0$, then since $\|\nabla u\|_2^2+|\Delta w|_2^2=\int_\G h(w)wd\G$, we obtain $\|\nabla u\|_2 =0$. By the Poincar\'e inequality, it follows that $\|u\|_2 =0$. Hence $(u,w) = (0,0)$, contradicting $(u,w) \in \mathscr N$.

Then we have from \eqref{potentialenergy}, \eqref{nehari2}, and condition (1) of Assumption \ref{assumption3} that
\begin{align}\label{posd1}
\mathscr{J}(u,w)&=\frac{1}{2}\left(\|\nabla u\|_2^2+|\Delta w|_2^2\right)-\int_\G H(w)d\G\notag\\
&>\frac{1}{2}\left(\|\nabla u\|_2^2+|\Delta w|_2^2\right)-\frac{1}{\theta}\int_\G h(w)wd\G\notag\\
&=\left(\frac{1}{2}-\frac{1}{\theta}\right)\left(\|\nabla u\|_2^2+|\Delta w|_2^2\right),
\end{align}
where $\theta>2$. Note that as $(u,w)\in\mathscr{N}$, and using conditions (2)-(3) of Assumption \ref{assumption3}, we have
\begin{align}\label{posd2}
\|\nabla u\|_2^2+|\Delta w|_2^2&=\int_\G h(w)wd\G\notag\\
&\leq |\G|\sup_{x\in\G}|h(w)w|\notag\\
&\leq|\G|\sup_{x\in\G}\left(h(|w|)|w|\right)\notag\\
&\leq|\G|h\left(|w|_\infty\right)|w|_\infty\notag\\
&\leq|\G|h\left(C|\Delta w|_2\right)C|\Delta w|_2,
\end{align}
since $H^2_0(\G)\hookrightarrow L^\infty(\G)$.

Let $y(t):=\left(\|\nabla u\|_2^2+|\Delta w|_2^2\right)^\frac{1}{2}$ and note $y(t)\geq |\Delta w(t)|_2$.  Then from \eqref{posd2} one has
\begin{align}\label{posd3}
y^2(t)\leq |\G| h(Cy(t))Cy(t),
\end{align}
and in turn
\begin{align}\label{posd4}
\frac{1}{C^2 |\G|}\leq \frac{h(Cy(t))}{Cy(t)},
\end{align}
since $y(t)>0$. By condition (4) of Assumption \ref{assumption3}, we can conclude that there is some $c_0>0$ such that $y(t)\geq c_0$.  Combining this with \eqref{posd1} yields
\begin{align}\label{posd5}
\mathscr{J}(u,w)&>\left(\frac{1}{2}-\frac{1}{\theta}\right)\left(\|\nabla u\|_2^2+|\Delta w|_2^2\right)\notag\\
&= \left(\frac{1}{2}-\frac{1}{\theta}\right)y^2(t)\notag\\
&\geq \left(\frac{1}{2}-\frac{1}{\theta}\right)c_0^2>0.
\end{align}
As $c_0$ is independent of the choice of $(u,w)\in\mathscr{N}$, we can conclude that $d\geq \left(\frac{1}{2}-\frac{1}{\theta}\right)c_0^2>0$.
\end{proof}
Note as expected that the potential well and Nehari manifold are disjoint by \eqref{potentialwell} and \eqref{depth}, that is
\begin{align}\label{disjoint}
\mathscr{W}\cap\mathscr{N}=\varnothing.
\end{align}
This enables the decomposition of the potential well into two parts, the `stable' part denoted $\mathscr{W}_1$ and `unstable' part denoted $\mathscr{W}_2$.  Formally we define them as follows:
\begin{align*}
&\mathscr{W}_1:=\left\{(u,w)\in\mathscr{W}:\|\nabla u\|_2^2+|\Delta w|_2^2>\int_\G h(w)wd\G\right\}\cup\{(0,0)\},\\
&\mathscr{W}_2:=\left\{(u,w)\in\mathscr{W}:\|\nabla u\|_2^2+|\Delta w|_2^2<\int_\G h(w)wd\G\right\}.
\end{align*}
Clearly we have $\mathscr{W}_1\cap\mathscr{W}_2=\varnothing$ and $\mathscr{W}_1\cup\mathscr{W}_2=\mathscr{W}$.

Lastly we define the total energy $\mathscr{E}(t)$ of the system \eqref{PDE} as follows:
\begin{align}\label{totalen}
\mathscr{E}(t):&=\frac{1}{2}\left(\|u_t(t)\|_2^2+|w_t(t)|_2^2\right)+\mathscr{J}(u(t),w(t)) =E(t)-\int_{\Gamma} H(w)d\G,
\end{align}
where the quadratic energy $E(t)=\frac{1}{2}\left(\|u_t(t)\|_2^2+\|\nabla u(t)\|_2^2+|w_t(t)|_2^2+|\Delta w(t)|_2^2\right)$. In this manner, one can rewrite the energy identity \eqref{energy} as
\begin{align}\label{modenergy}
\mathscr{E}(t)=\mathscr{E}(0),
\end{align}
for all $t\in[0,T_0]$, and we can conclude that $\mathscr{E}(t)$ is constant on $[0,T_0]$.  We can now state our final results of the paper:

\begin{thm}\label{thm:wellsol} \textbf{(Global existence of potential well solutions-Part 1)}  
Let $h$ satisfy Assumption \ref{assumption3}. Assume that $(u_0, w_0) \in \mathscr{W}_1$ and $\mathscr{E}(0) < d$. Then system \eqref{PDE} admits a global weak solution $(u(t), w(t)) \in \mathscr W_1$ for all $t\geq 0$. Moreover, for all $t \geq 0$, the following estimates hold for the potential energy $\mathscr{J}(u(t), w(t))$, the total energy $\mathscr{E}(t)$, and the quadratic energy $E(t)$:
\begin{align} \label{3est}
\begin{cases}
    \mathscr{J}(u(t), w(t)) \leq \mathscr{E}(t) = \mathscr{E}(0) < d, \\
    E(t) < \dfrac{\theta d}{\theta - 2},
\end{cases}
\end{align}
where $\theta > 2$ is the constant given in Assumption \ref{assumption3}.
\end{thm}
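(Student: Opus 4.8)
The plan is to propagate the unique local weak solution furnished by Theorem~\ref{thm:localexist} along a maximal forward interval of existence $[0,T^{\ast})$, to show en route that the trajectory never leaves $\mathscr{W}_1$ and that the quadratic energy stays strictly below $\tfrac{\theta d}{\theta-2}$, and finally to use that the local existence time is uniform over initial data of bounded quadratic energy in order to exclude $T^{\ast}<\infty$. The cheap part first: concatenating the local energy identities, \eqref{energy} (hence \eqref{modenergy}) holds on $[0,T^{\ast})$, so $\mathscr{E}(t)=\mathscr{E}(0)<d$ there; since by \eqref{totalen} one has $\mathscr{E}(t)=\tfrac12(\|u_t\|_2^2+|w_t|_2^2)+\mathscr{J}(u(t),w(t))\ge\mathscr{J}(u(t),w(t))$, this immediately gives $\mathscr{J}(u(t),w(t))\le\mathscr{E}(t)=\mathscr{E}(0)<d$, i.e.\ the first line of \eqref{3est}, and in particular $(u(t),w(t))\in\mathscr{W}$ for every $t\in[0,T^{\ast})$.

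The core is the invariance $(u(t),w(t))\in\mathscr{W}_1$ on all of $[0,T^{\ast})$. Write $I(u,w):=\|\nabla u\|_2^2+|\Delta w|_2^2-\int_{\Gamma}h(w)w\,d\Gamma$, so that $\mathscr{W}_2=\mathscr{W}\cap\{I<0\}$, and recall the decomposition $\mathscr{W}=\mathscr{W}_1\cup\mathscr{W}_2$ into disjoint pieces established in the text after \eqref{disjoint} (which rests on $\mathscr{N}\cap\mathscr{W}=\varnothing$). I also need a gap around the origin: exactly the estimate in the proof of Proposition~\ref{positived}, via conditions (2)--(4) of Assumption~\ref{assumption3}, shows there is $\rho_0>0$ with $I(u,w)>0$ whenever $0<\|(u,w)\|_V<\rho_0$, hence the ball $B_{\rho_0}:=\{(u,w)\in V:\|(u,w)\|_V<\rho_0\}$ is contained in $\mathscr{W}_1$. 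Now put $S:=\{t\in[0,T^{\ast}):(u(t),w(t))\in\mathscr{W}_1\}$. Since $t\mapsto(u(t),w(t))$ is continuous into $V$ (by the regularity in Definition~\ref{def:weaksln}) and both $\mathscr{J}$ and $I$ are continuous on $V$ (the nonlinear terms because $H^2_0(\Gamma)\hookrightarrow L^{\infty}(\Gamma)$ and $h,H\in C(\mathbb R)$), the sets $\mathscr{W}$ and $\mathscr{W}_2$ are open in $V$, whence the complement of $S$, which equals $\{t:(u(t),w(t))\in\mathscr{W}_2\}$ by the decomposition, is relatively open in $[0,T^{\ast})$. And $S$ is relatively open: at $t\in S$ with $(u(t),w(t))\ne(0,0)$ one has $I(u(t),w(t))>0$, which persists on a neighbourhood (those nearby points then lie in $\mathscr{W}\cap\{I>0\}\subset\mathscr{W}_1$), while at $t\in S$ with $(u(t),w(t))=(0,0)$ one has $(u(s),w(s))\in B_{\rho_0}\subset\mathscr{W}_1$ for $s$ near $t$. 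As $0\in S$ and $[0,T^{\ast})$ is connected, $S=[0,T^{\ast})$.

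Granting this, the second line of \eqref{3est} is a one-line estimate. By \eqref{totalen}, $E(t)=\mathscr{E}(t)+\int_{\Gamma}H(w(t))\,d\Gamma=\mathscr{E}(0)+\int_{\Gamma}H(w(t))\,d\Gamma$. Condition (1) of Assumption~\ref{assumption3} gives $0\le H(s)\le\tfrac1\theta h(s)s$, so $\int_{\Gamma}H(w)\,d\Gamma\le\tfrac1\theta\int_{\Gamma}h(w)w\,d\Gamma$, while $(u(t),w(t))\in\mathscr{W}_1$ gives $\int_{\Gamma}h(w)w\,d\Gamma\le\|\nabla u\|_2^2+|\Delta w|_2^2\le 2E(t)$; hence $E(t)\le\mathscr{E}(0)+\tfrac2\theta E(t)$, i.e.\ $\tfrac{\theta-2}{\theta}E(t)\le\mathscr{E}(0)<d$, which is $E(t)<\tfrac{\theta d}{\theta-2}$ (the case $(u(t),w(t))=(0,0)$ being immediate). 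This bound does not depend on $T^{\ast}$, so $\|(u(t),w(t),u_t(t),w_t(t))\|_H=(2E(t))^{1/2}$ remains bounded on $[0,T^{\ast})$; if $T^{\ast}<\infty$, then restarting Theorem~\ref{thm:localexist} from $(u(t_1),w(t_1),u_t(t_1),w_t(t_1))\in H$ at a time $t_1<T^{\ast}$ with $T^{\ast}-t_1$ less than the local existence time attached to quadratic energy $\tfrac{\theta d}{\theta-2}$, and concatenating with the solution on $[0,t_1]$ (using uniqueness from Theorem~\ref{thm:localexist}), produces a weak solution on an interval strictly larger than $[0,T^{\ast})$, a contradiction. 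Therefore $T^{\ast}=\infty$, and \eqref{3est} together with $(u(t),w(t))\in\mathscr{W}_1$ holds for all $t\ge0$.

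The step I expect to be delicate is the invariance, and within it the point that the trajectory cannot slip from $\mathscr{W}_1$ into $\mathscr{W}_2$ through the origin: this is precisely where condition (4) of Assumption~\ref{assumption3} is used, in the form of the ball $B_{\rho_0}\subset\mathscr{W}_1$ --- the same mechanism that makes $d>0$ in Proposition~\ref{positived}. A secondary technical point is the uniformity of the local existence time of Theorem~\ref{thm:localexist} over initial data with quadratic energy bounded by $\tfrac{\theta d}{\theta-2}$, on which the continuation step relies.
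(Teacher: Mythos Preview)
Your proof is correct and follows the same overall strategy as the paper: establish invariance of $\mathscr{W}_1$ along the local trajectory using the lower bound on $\|(u,w)\|_V$ that comes from condition~(4) of Assumption~\ref{assumption3} (this is the paper's Proposition~\ref{prop:W1}), deduce the uniform bound $E(t)<\tfrac{\theta d}{\theta-2}$ from $(u(t),w(t))\in\mathscr{W}_1$ together with the Ambrosetti--Rabinowitz condition, and then extend globally via the uniform local existence time of Theorem~\ref{thm:localexist}. The only difference is presentational: the paper proves invariance by contradiction, taking $t^{\ast}$ as the supremum of crossing times satisfying \eqref{potwell3} and showing $(u(t^{\ast}),w(t^{\ast}))=(0,0)$ is incompatible with $(u(t),w(t))\in\mathscr{W}_2$ just after $t^{\ast}$, whereas you package the same mechanism as an open--closed connectedness argument for the set $S$; both hinge on the identical gap estimate around the origin extracted from the proof of Proposition~\ref{positived}.
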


By assuming additional regularity of the initial data, we obtain the global existence of strong solutions in the potential well. 

\begin{thm}\label{thm:wellsol2} \textbf{(Global existence of potential well solutions-Part 2)}  
Let $h$ satisfy Assumption \ref{assumption3}. Assume that $(u_0, w_0) \in \mathscr{W}_1$ and $\mathscr{E}(0) < d$. 
Suppose further that $U_0 = (u_0,w_0,u_1,w_1)\in\D(\mathscr{A})$. Then equation \eqref{PDE4} admits a unique global strong solution $U = (u,w, u_t, w_t)\in W^{1,\infty}(0,T;H)$, satisfying $U(t)\in\D(\mathscr{A})$ and $(u(t), w(t)) \in \mathscr W_1$ for all $t\in [0,T]$, where $T>0$ is arbitrary. 
Moreover, all inequalities in (\ref{3est}) hold. 
\end{thm}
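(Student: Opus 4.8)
The plan is to bootstrap the local strong existence result of Theorem~\ref{thm:strong} using the uniform-in-time a priori energy bound already established for weak potential well solutions in Theorem~\ref{thm:wellsol}, identifying the two solution concepts through uniqueness of weak solutions (Theorem~\ref{thm:localexist}). First, since $U_0\in\D(\mathscr{A})$, Theorem~\ref{thm:strong} produces a unique local strong solution $U=(u,w,u_t,w_t)\in W^{1,\infty}(0,T_0;H)$ on some interval $[0,T_0]$, with $U(t)\in\D(\mathscr{A})$ for all $t\in[0,T_0]$ and satisfying the energy identity~\eqref{energy}. Writing $U_t+\mathscr{A}U=0$ componentwise, testing the third and fourth equations against admissible test functions $\phi$ and $\psi$, integrating in time, and invoking the identities~\eqref{PDE1} and~\eqref{Bdp}, one verifies that $(u,w)$ is also a weak solution of~\eqref{PDE} on $[0,T_0]$ in the sense of Definition~\ref{def:weaksln}; this is a routine integration-by-parts computation that needs no growth restriction on $h$.

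Next, I would transfer the potential well bound to this strong solution. Since $(u_0,w_0)\in\mathscr{W}_1$ and $\mathscr{E}(0)<d$, Theorem~\ref{thm:wellsol} supplies a \emph{global} weak solution of~\eqref{PDE} that stays in $\mathscr{W}_1$ and whose quadratic energy satisfies $E(t)<\frac{\theta d}{\theta-2}=:M$ for all $t\ge 0$, by~\eqref{3est}. By the uniqueness assertion of Theorem~\ref{thm:localexist}, this global weak solution coincides with $(u,w)$ on $[0,T_0]$, so the strong solution itself obeys $(u(t),w(t))\in\mathscr{W}_1$ and $E(t)<M$ throughout its interval of existence, and all inequalities in~\eqref{3est} hold there.

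Finally, I would promote the local solution to a global one by continuation. Let $[0,T_{\max})$ be the maximal interval on which~\eqref{PDE4} admits a strong solution with values in $\D(\mathscr{A})$, and suppose for contradiction that $T_{\max}<\infty$. The two steps above, applied on $[0,T]$ for each $T<T_{\max}$, show $\sup_{t\in[0,T_{\max})}E(t)\le M$. Because the guaranteed lifespan in Theorem~\ref{thm:strong} depends only on an upper bound for the initial quadratic energy, there is a fixed $\delta=\delta(M)>0$ such that restarting~\eqref{PDE4} from any time $t_0\in[0,T_{\max})$ — at which $U(t_0)\in\D(\mathscr{A})$ by Theorem~\ref{thm:strong} and $E(t_0)\le M$ — yields a strong solution on at least $[t_0,t_0+\delta]$. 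Choosing $t_0>T_{\max}-\delta$ contradicts the maximality of $T_{\max}$, hence $T_{\max}=\infty$. For any prescribed $T>0$ the solution then exists on $[0,T]$; uniqueness follows from local uniqueness in Theorem~\ref{thm:strong} on successive subintervals, $U(t)\in\D(\mathscr{A})$ is preserved by construction, and $(u(t),w(t))\in\mathscr{W}_1$ together with~\eqref{3est} follows by running the identification of the second step on all of $[0,T]$.

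The main point requiring care is the continuation step: it relies on the guaranteed existence time in Theorem~\ref{thm:strong} being bounded below uniformly over the energy sublevel set $\{E\le M\}$, and on the restart data genuinely lying in $\D(\mathscr{A})$ — both of which are intrinsic to the proof of Theorem~\ref{thm:strong}. Beyond this, the scheme is precisely the local-to-global continuation already used there under the linear growth hypothesis $|h(s)|\le c(|s|+1)$, the sole difference being that the a priori energy bound now comes from the potential well structure (Proposition~\ref{positived} and Theorem~\ref{thm:wellsol}) rather than from Gronwall's inequality.
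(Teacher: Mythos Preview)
Your proposal is correct and follows essentially the same continuation strategy as the paper: obtain the uniform bound $E(t)<\frac{\theta d}{\theta-2}$ from the potential well structure, note that the local existence time in Theorem~\ref{thm:strong} depends only on an upper bound for the initial quadratic energy, and iterate. The only cosmetic difference is that the paper applies Proposition~\ref{prop:W1} and the bound~\eqref{potwell8} directly to the strong solution (which already satisfies the energy identity) and iterates in fixed steps $[0,T_0]\to[T_0,2T_0]\to\cdots$ with $K^2=\frac{4\theta d}{\theta-2}+1$, whereas you route the energy bound through Theorem~\ref{thm:wellsol} and uniqueness of weak solutions and phrase the extension as a maximal-interval contradiction; both are standard and equivalent.
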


\vspace{0.1 in}

\section{Well-posedness of Strong Solutions} \label{sec-strong}
This section is devoted to proving Theorem~\ref{thm:strong}, which establishes the local and global well-posedness of strong solutions. 
We employ the theory of nonlinear semigroups and monotone operators. The first step was to reformulate the PDE system~\eqref{PDE} in the operator-theoretic form~\eqref{PDE4}, as demonstrated in the previous section. Due to the presence of the nonlinear source term $h(w)$ acting on the plate, our strategy involves first addressing the case where $h$ is globally Lipschitz from $H^2_0(\Gamma)$ to $L^2(\Gamma)$, and subsequently extending the analysis to the locally Lipschitz scenario.

\subsection{Globally Lipschitz Sources}
This step addresses the case where the source term $h: H^2_0(\Gamma) \rightarrow L^2(\Gamma)$ is globally Lipschitz. Under this assumption, we establish the following lemma.

\begin{lem}\label{Lem:GLS} 
Assume that $h:H^2_0(\Gamma)\to L^2(\Gamma)$ is globally Lipschitz. For any initial data $U_0\in\D(\mathscr{A})$, equation \eqref{PDE4} admits a unique global strong solution $U\in W^{1,\infty}(0,T;H)$ for arbitrary $T>0$, with $U(t)\in\D(\mathscr{A})$ for all $t\in [0,T]$.
\end{lem}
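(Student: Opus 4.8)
\textbf{Proof proposal for Lemma~\ref{Lem:GLS}.}

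The plan is to verify the hypotheses of Kato's theorem on nonlinear semigroups generated by maximal monotone (more precisely, $m$-accretive up to a shift) operators, as used in \cite{Barbu2,Sh}, by writing $\mathscr{A} = \mathscr{A}_0 + \mathscr{F}$, where $\mathscr{A}_0$ carries the linear wave--plate dynamics (the rows $-y$, $-z$, $A(u-Rz)$, $\Delta^2 w + \g y$) and $\mathscr{F}(u,w,y,z) = (0,0,0,-h(w))$ is the globally Lipschitz perturbation. First I would fix the right inner product on $H$: the natural one above is already the energy inner product, so I would check directly that $\mathscr{A}_0$ is monotone on $H$, i.e. $(\mathscr{A}_0 U, U)_H \geq 0$ for $U \in \D(\mathscr{A})$. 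The key computation: $(\mathscr{A}_0 U, U)_H = -(\nabla y, \nabla u)_\O - (\Delta z, \Delta w)_\G + (\nabla(u-Rz)\cdot\ldots)$ — more precisely one uses the extension identities \eqref{Adp}, \eqref{PDE1}, \eqref{Bdp}: the term $\langle A(u-Rz), y\rangle = (\nabla(u-Rz),\nabla y)_\O = (\nabla u,\nabla y)_\O - (z,\g y)_\G$, and $\langle \Delta^2 w + \g y, z\rangle = (\Delta w,\Delta z)_\G + (\g y, z)_\G$; adding and cancelling the $(\nabla u,\nabla y)_\O$ and $(\Delta w,\Delta z)_\G$ cross terms against $-y,-z$ rows, the $(z,\g y)_\G$ terms also cancel, leaving $(\mathscr{A}_0 U,U)_H = 0$. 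So $\mathscr{A}_0$ is in fact skew-adjoint-like (monotone with monotone negative), which is expected for a conservative system.

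Next I would establish maximality, i.e. that $I + \mathscr{A}_0$ (or $\lambda I + \mathscr{A}_0$ for some $\lambda>0$) is surjective onto $H$: given $(f_1,f_2,f_3,f_4)\in H$, solve $U + \mathscr{A}_0 U = F$. This unwinds to $u - y = f_1$, $w - z = f_2$, $u + A(u-Rz) = f_3 + \ldots$, $w + \Delta^2 w + \g y = f_4 + \ldots$; eliminating $y = u - f_1$ and $z = w - f_2$ gives a coupled elliptic system for $(u,w)$ in $H^1_{\G_0}(\O)\times H^2_0(\G)$, which I would solve via Lax--Milgram on $V = H^1_{\G_0}(\O)\times H^2_0(\G)$ — the bilinear form is coercive because of the $+u$, $+w$ zeroth-order terms and the $A, \Delta^2, R$ structure (the $R$-coupling and trace terms are the ones to handle carefully, using \eqref{PDE1} and continuity of $R$ from $L^2(\G)$). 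Then one checks the resulting $U$ lies in $\D(\mathscr{A})$ by elliptic regularity built into the definition of $\D(\mathscr{A})$ (namely $A(u-Rz)\in L^2(\O)$ and $\Delta^2 w\in L^2(\G)$ hold automatically from the equation). Hence $\mathscr{A}_0$ is maximal monotone, so $-\mathscr{A}_0$ generates a $C_0$ semigroup of contractions on $H$.

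Finally, since $\mathscr{F}$ is globally Lipschitz on $H$ (it only sees $w \mapsto -h(w)$, and $h: H^2_0(\G)\to L^2(\G)$ is assumed globally Lipschitz, while the other components are zero), the operator $\mathscr{A} = \mathscr{A}_0 + \mathscr{F}$ is a Lipschitz perturbation of a maximal monotone operator; equivalently $\mathscr{A} + \omega I$ is maximal monotone for $\omega$ the Lipschitz constant of $\mathscr{F}$. Kato's theorem (see \cite{Barbu2,Sh}) then yields, for each $U_0 \in \D(\mathscr{A})$, a unique strong solution $U \in W^{1,\infty}(0,T;H)$ with $U(t)\in\D(\mathscr{A})$ for all $t$, on any interval $[0,T]$ — global existence being free here because the Lipschitz perturbation precludes finite-time blow-up (a Gronwall estimate on $\|U(t)\|_H$ bounds the solution on any finite interval). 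I expect the main obstacle to be the maximality/surjectivity step: correctly setting up the coupled elliptic problem with the Dirichlet--Neumann map $R$ and the trace coupling so that Lax--Milgram applies, and confirming that the solution automatically satisfies the two regularity constraints defining $\D(\mathscr{A})$; the monotonicity computation, while needing care with the extension identities, should fall out cleanly to zero.
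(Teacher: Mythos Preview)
Your proposal is correct and lands on the same endgame as the paper---showing that $\mathscr{A}+\omega I$ is $m$-accretive for some $\omega>0$ and then invoking Kato's theorem---but the route to maximality is organized differently. You split $\mathscr{A}=\mathscr{A}_0+\mathscr{F}$, establish that the \emph{linear} part $\mathscr{A}_0$ is maximal monotone (monotonicity via the cancellation you described, surjectivity of $I+\mathscr{A}_0$ via Lax--Milgram on $V$), and then invoke the standard fact that a globally Lipschitz perturbation of a maximal monotone operator remains $m$-accretive after a shift. The paper instead treats the full nonlinear $\mathscr{A}$ in one pass: the accretivity computation is the same as yours (with the $h$ term contributing $-\tfrac{L}{2}\|U-\hat U\|_H^2$), but for maximality it writes the range condition $(\mathscr{A}+\lambda I)U=(a,b,c,d)$ as a nonlinear system, eliminates $(u,w)$ in favor of $(y,z)$, and shows the resulting nonlinear operator $\mathscr{B}:V\to V'$ is strongly monotone and continuous, hence surjective by the Browder--Minty type result in \cite{Barbu2}. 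Your modular linear-plus-perturbation argument is arguably cleaner and reusable; the paper's direct approach avoids citing a separate perturbation lemma but requires tuning $\lambda$ and an auxiliary parameter $\eta$ to absorb the $h$ contribution in the strong-monotonicity estimate. One small remark: once $m$-accretivity of $\mathscr{A}+\omega I$ is established, Kato's theorem already gives global strong solutions on every $[0,T]$---you do not need the extra Gronwall argument you mention at the end.
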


\begin{proof}Recall that by Kato's Theorem (see, for example, \cite{Sh}), if $\mathscr{A}+\o I$ is $m$-accretive for some positive $\o$, then for each $U_0\in\D(\mathscr{A})$ there is a unique strong solution $U$ of \eqref{PDE4}, i.e., $U\in W^{1,\infty}(0,T;H)$ such that $U(0)=U_0$, $U(t)\in\D(\mathscr{A})$ for all $t\in[0,T]$, and equation \eqref{PDE4} is satisfied a.e. on $[0,T]$, where $T>0$ is arbitrary.  In this light, it suffices to show that the operator $\mathscr{A}+\o I$ is $m$-accretive for some positive $\o$.  Recall that an operator $\mathscr{A}:\D(\mathscr{A})\subset H\lra H$ is accretive if $(\mathscr{A}x_1-\mathscr{A}x_2,x_1-x_2)_H\geq0$, for all $x_1,x_2\in\D(\mathscr{A})$, and it is $m$-accretive if, in addition, $\mathscr{A}+I$ maps $\D(\mathscr{A})$ onto $H$.\\

\noindent{}\textbf{Step 1: Accretivity of $\mathscr{A}+\omega I$ for some positive $\omega$.}
Let $U=(u,w,y,z)$, $\hat{U}=(\hat{u},\hat{w},\hat{y},\hat{z}) \in\D(\mathscr{A})$.  We aim to find $\o>0$ such that
\begin{align*}
((\mathscr{A}+\o I)U-(\A+\o I)\hat{U},U-\hat{U})_H\geq0.
\end{align*}
By straightforward calculations, we obtain:
\begin{align}\label{GLS1}
((\mathscr{A}&+\o I)U-(\A+\o I)\hat{U},U-\hat{U})_H\notag\\
=&(\A (U)-\A( \hat{U}),U-\hat{U})_H+\o|U-\hat{U}|_H^2\notag\\
=&-(\grad(y-\hat{y}), \grad(u-\hat{u}))_{\O}-(\Delta (z-\hat{z}),\Delta (w-\hat{w}))_{\G}\notag\\
&+\<A(u-\hat{u}),y-\hat{y}\>-\<AR(z-\hat{z}),y-\hat{y}\>+\< \Delta^2(w-\hat{w}),z-\hat{z}\>
\notag\\ 
&+(\g(y-\hat{y}),z-\hat{z})_\G -(h(w)-h(\hat{w}),z-\hat{z})_\G+\o|U-\hat{U}|_H^2.
\end{align}
Using \eqref{Adp}, \eqref{PDE1}, and \eqref{Bdp}, we have
\begin{align}\label{GLS2}
\begin{cases}
\<A(u-\hat{u}),y-\hat{y}\>=(\nabla(u-\hat{u}),\nabla(y-\hat{y}))_{\O}, \vspace{.1in} \\
\<AR(z-\hat{z}),y-\hat{y}\>=(z-\hat{z},\g(y-\hat{y}))_\G, \vspace{.1in}\\
\< \Delta^2(w-\hat{w}),z-\hat{z}\>= (\Delta(w-\hat{w}),\Delta(z-\hat{z}))_\G.
\end{cases}
\end{align}
Since $h:H^2_0(\G)\lra L^2(\G)$ is globally Lipschitz, we let $L$ be the Lipschitz constant of $h$. Then, by Young's inequality,
\begin{align}\label{GLS7}
-(h(w)-h(\hat{w}),z-\hat{z})_\G&\geq-L | \Delta (w-\hat{w}) |_{2} |z-\hat{z}|_2\notag\\
&\geq-\frac{L}{2}\left(| \Delta (w-\hat{w}) |_{2}^2+|z-\hat{z}|_2^2\right)\notag\\
&\geq-\frac{L}{2}|U-\hat{U}|_H^2.
\end{align}
Combining \eqref{GLS1}-\eqref{GLS7}, we find
\begin{align*}
((\mathscr{A}&+\o I)U-(\A+\o I)\hat{U},U-\hat{U})_H\geq\left(\o-\frac{L}{2}\right)|U-\hat{U}|_H^2.
\end{align*}
Thus, by selecting $\omega>\frac{L}{2}$, we establish the accretivity of $\mathscr{A}+\omega I$.\\

\noindent{}\textbf{Step 2: $m$-accretivity of $\mathscr A+\l I$ for some $\l>0$.} 
To invoke Kato's Theorem and complete the proof of Lemma \ref{Lem:GLS}, one must establish that $\mathscr{A}+\l I:\D(\A)\lra H$ is onto for some $\l>0$.\\

\noindent{}Let $\l>0$ (to be determined later) and $(a,b,c,d)\in H$.  We must show that there exists $(u,w,y,z)\in\D(\A)$ such that $(\A+\l I)(u,w,y,z)=(a,b,c,d)$, that is, 
\begin{align}\label{GLS8}
\begin{cases}
-y+\l u=a\\
-z+\l w=b\\
A(u-Rz)+\l y=c\\
\Delta^2 w+\g y-h(w) +\l z=d.
\end{cases}
\end{align}
Note, \eqref{GLS8} is equivalent to
\begin{align}\label{GLS9}
\begin{cases}
\frac{1}{\l}Ay-ARz+\l y=c-\frac{1}{\l}Aa \vspace{.1in}\\
\frac{1}{\l}\Delta^2 z+\g y-h(\frac{b+z}{\l})+\l z=d-\frac{1}{\l}\Delta^2b.
\end{cases}
\end{align}
Let $V=H^1_{\G_0}(\O)\times H^2_0(\G)$ and notice that the right hand side of \eqref{GLS9} belongs to $V'$.  We now define the operator $\mathscr{B}:V\lra V'$ by:
\begin{align*}
\mathscr{B}\begin{bmatrix}y \vspace{.1in} \\z\end{bmatrix}^{tr}=\begin{bmatrix}\frac{1}{\l}Ay-ARz+\l y \vspace{.1in}\\ \frac{1}{\l}\Delta^2 z+\g y-h(\frac{b+z}{\l})+\l z\end{bmatrix}^{tr}.
\end{align*}
At this point we wish to establish that $\mathscr{B}$ is surjective.  By Corollary 1.2 (p.45) in Barbu \cite{Barbu2}, it is enough to show that $\mathscr{B}$ is maximal monotone and coercive.  To accomplish this we will first show that $\mathscr{B}$ is strongly monotone.  To this end, let $Y, \, \hat{Y} \in V$, where $Y= (y,z)$ and $\hat{Y}=(\hat{y},\hat{z})$.  By straightforward calculations, we obtain
\begin{align}\label{2.14}
\<\mathscr{B}&(Y)-\mathscr{B}(\hat{Y}),Y-\hat{Y}\>_{V',V}\notag\\
&=\frac{1}{\l}\<A(y-\hat{y}),y-\hat{y}\> -\<AR(z-\hat{z}),y-\hat{y}\>+\l\|y-\hat{y}\|_2^2\notag\\
&+\frac{1}{\l}\<\Delta^2(z-\hat{z}),z-\hat{z}\> +(\g (y-\hat{y}),z-\hat{z})_{\G}\notag\\
&-\left(h\left(\frac{b+z}{\l}\right)-h\left(\frac{b+\hat{z}}{\l}\right),z-\hat{z}\right)_{\G}+\l|z-\hat{z}|_2^2.
\end{align}

Thanks to \eqref{Adp}, \eqref{PDE1}, and \eqref{Bdp}, we have
\begin{align}\label{mmsubs}
\begin{cases}
\<A(y-\hat{y}),y-\hat{y}\>=\|\nabla(y-\hat{y})\|_2^2, \vspace{.1in} \\
\<AR(z-\hat{z}),y-\hat{y}\>=(z-\hat{z},\g(y-\hat{y}))_\G, \vspace{.1in}\\
\< \Delta^2(z-\hat{z}),z-\hat{z}\>= |\Delta(z-\hat{z})|_2^2.
\end{cases}
\end{align}

Note from H\"{o}lder's inequality that
\begin{align*}
\left(h\left(\frac{b+z}{\l}\right)-h\left(\frac{b+\hat{z}}{\l}\right),z-\hat{z}\right)_{\G}\leq \left|h\left(\frac{b+z}{\l}\right)-h\left(\frac{b+\hat{z}}{\l}\right)\right|_2\left|z-\hat{z}\right|_2,
\end{align*}
and, by way of $h$ being globally Lipschitz with Lipschitz constant $L$, we further establish that
\begin{align}\label{hterm}
\left(h\left(\frac{b+z}{\l}\right)-h\left(\frac{b+\hat{z}}{\l}\right),z-\hat{z}\right)_{\G} \leq \frac{L}{\l}|\Delta (z-\hat{z})|_2\left|z-\hat{z}\right|_2.
\end{align}

It follows now from \eqref{2.14}, \eqref{mmsubs}, \eqref{hterm}, and Young's inequality (with an $\eta>0$) that:
\begin{align}\label{2.15}
\<\mathscr{B}&(Y)-\mathscr{B}(\hat{Y}),Y-\hat{Y}\>_{V',V}\notag\\
&\geq\frac{1}{\l}\| \grad (y-\hat{y}) \|_{2}^2+\l\|y-\hat{y}\|_2^2 
+\frac{1}{\l}  | \Delta (z-\hat{z}) |_{2}^2-\frac{L}{\l} | \Delta (z-\hat{z}) |_{2} |z-\hat{z}|_2+\l|z-\hat{z}|_2^2 \notag \\
&\geq\frac{1}{\l}\| \grad (y-\hat{y}) \|_{2}^2+\l\|y-\hat{y}\|_2^2 +\frac{1}{\l}  | \Delta (z-\hat{z}) |_{2}^2\notag\\
&\hspace{5mm}-\frac{L^2}{4 \eta \l} | \Delta (z-\hat{z}) |_{2}^2 - \frac{\eta}{\l} | z-\hat{z} |_{2}^2 + \l |z-\hat{z}|_2^2\notag\\
&\geq\frac{1}{\l}\| \grad (y-\hat{y}) \|_{2}^2+\l\|y-\hat{y}\|_2^2 +\left( \frac{1}{\l}  -\frac{L^2}{4 \eta \l} \right) | \Delta (z-\hat{z}) |_{2}^2 + \left(\l  - \frac{\eta}{\l}\right) | z-\hat{z} |_{2}^2. 
\end{align}
By selecting $\eta = \frac{L^2}{2}$ and then selecting $\l = \frac{L}{\sqrt{2}}$, it follows from \eqref{2.15} that 
\begin{align*}
\<\mathscr{B}(Y)-\mathscr{B}(\hat{Y}),Y-\hat{Y}\>_{V',V} \geq \frac{1}{2 \l} \Big( \| \grad (y-\hat{y}) \|_{2}^2 +| \Delta (z-\hat{z}) |_{2}^2\Big) =\frac{1}{2\lambda} \|Y-\hat{Y}\|^2_V,
\end{align*} 
proving $\mathscr{B}$ is strongly monotone.  It is easy to see that strong monotonicity implies coercivity of $\mathscr{B}$.  

Next, we show that $\mathscr{B}$ is continuous.  It is clear that the mappings $A:H^1_{\G_0}(\O)\lra (H^1_{\G_0}(\O))'$, $\Delta^2:H^2_0(\G)\lra H^{-2}(\G)$, and $\g:H^1_{\G_0}(\O)\lra H^{-2}(\G)$ are continuous.  Moreover, $h:H^2_0(\G)\lra L^2(\G)$ is globally Lipschitz continuous, thus the mapping $ z \mapsto h(\frac{b+z}{\l})$ is also continuous from $H^2_0(\G)$ to $ H^{-2}(\G)$. In addition, by the properties of the  Dirichl\'et-Neumann map $R$, we deduce that $AR:H^2_0(\G)\lra (H^1_{\G_0}(\O))'$ is continuous.  It follows that $\mathscr{B}$ is continuous. Recall again from \cite{Barbu2} that along with the monotonicity established earlier, continuity allows us to conclude that $\mathscr{B}$ is maximal monotone.  With coercivity we finally have that $\mathscr{B}$ is surjective.

As $\mathscr{B}$ is surjective, we have established the existence of $(y,z)\in V=H^1_{\G_0}(\O)\times H^2_0(\G)$ such that $(y,z)$ satisfies \eqref{GLS9}.  In addition, $(u,w)=\left(\frac{a+y}{\l},\frac{b+z}{\l}\right)\in H^1_{\G_0}(\O)\times H^2_0(\G)$.  
Moreover, by (\ref{GLS8}), we have $A(u-Rz) \in L^2(\Omega)$ and $\Delta^2 w \in L^2(\Gamma)$. 
Therefore, $(u,w,y,z)\in\D(\mathscr{A})$ with the property that $(\A+\l I)(u,w,y,z)=(a,b,c,d)$ for $\lambda=\frac{L}{\sqrt{2}}$.  Thus, the proof of maximal accretivity is completed and so is the proof of Lemma \ref{Lem:GLS}.
\end{proof}

\vspace{0.1 in}

\subsection{Locally Lipschitz Sources}
In this subsection, we employ a standard truncation procedure (see, e.g., \cite{Becklin-Rammaha2, BL1, CEL1, GR}) combined with Lemma~\ref{Lem:GLS} to establish a result for the case where $h$ is locally Lipschitz continuous from $H^2_0(\Gamma)$ to $L^2(\Gamma)$.

In Theorem~\ref{thm:strong}, concerning the well-posedness of strong solutions, we assume $h \in C^1(\mathbb{R})$. This implies the local Lipschitz continuity of the Nemytskii operator $h: H^2_0(\Gamma) \to L^2(\Gamma)$, by Proposition~\ref{hprop}. Consequently, the local well-posedness of strong solutions stated in Theorem~\ref{thm:strong} follows directly from Lemma~\ref{Lem:LLS} below.

\begin{lem}\label{Lem:LLS}
Assume that $h:H^2_0(\Gamma)\to L^2(\Gamma)$ is locally Lipschitz continuous. 
For any initial data $U_0\in\D(\mathscr{A})$, equation \eqref{PDE4} admits a unique local strong solution $U\in W^{1,\infty}(0,T_0;H)$ for some $T_0>0$, with $U(t)\in\D(\mathscr{A})$ for all $t\in [0,T_0]$, where $T_0>0$ depends on the initial quadratic energy $E(0)$.
\end{lem}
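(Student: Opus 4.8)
The plan is to derive Lemma~\ref{Lem:LLS} from the globally Lipschitz result, Lemma~\ref{Lem:GLS}, via a truncation argument in which the source term is replaced by one that agrees with $h$ on a large ball in $H^2_0(\Gamma)$ and is globally Lipschitz. Fix $U_0 \in \D(\mathscr{A})$ and set $R := \|U_0\|_H + 1$; let $L_R$ be the Lipschitz constant of $h$ on the ball $B_R = \{w \in H^2_0(\Gamma) : |\Delta w|_2 \leq R\}$. Define a truncated nonlinearity $h_R : H^2_0(\Gamma) \to L^2(\Gamma)$ by $h_R(w) = h\big(\chi(|\Delta w|_2) w \big)$ for a suitable Lipschitz cutoff $\chi : [0,\infty) \to [0,1]$ with $\chi \equiv 1$ on $[0,R]$ and $\chi \equiv 0$ on $[2R,\infty)$ — or, more simply, define $h_R(w)$ to be $h(w)$ when $|\Delta w|_2 \leq R$ and to equal $h$ of the radial projection of $w$ onto $B_R$ otherwise. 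One checks that $h_R$ is globally Lipschitz from $H^2_0(\Gamma)$ to $L^2(\Gamma)$, with constant depending only on $R$ and $L_R$; this is a routine estimate using that the radial retraction onto a ball in Hilbert space is Lipschitz with constant $1$ (combined with the cutoff factor).

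With $h_R$ in hand, Lemma~\ref{Lem:GLS} applied to the operator $\mathscr{A}_R$ (defined exactly as $\mathscr{A}$ in \eqref{PDE3} but with $h$ replaced by $h_R$) yields a unique global strong solution $U_R \in W^{1,\infty}(0,T;H)$ of $U_t + \mathscr{A}_R U = 0$, $U(0) = U_0$, with $U_R(t) \in \D(\mathscr{A}_R) = \D(\mathscr{A})$ for all $t$. The next step is the standard energy bound: testing the equation and using the structure of $\mathscr{A}_R$, the quadratic energy $E_R(t) := \frac12(\|u_t\|_2^2 + \|\nabla u\|_2^2 + |w_t|_2^2 + |\Delta w|_2^2)$ satisfies an identity analogous to \eqref{energy} with $h_R$ in place of $h$; since $h_R$ is globally Lipschitz (hence at most linearly growing in the $H^2_0(\Gamma)$-norm), Gr\"onwall's inequality gives $E_R(t) \leq E_R(0) e^{Ct}$ for a constant $C = C(R)$. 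In particular there is $T_0 = T_0(E(0)) > 0$, depending only on $R$ (hence only on $E(0)$), such that $|\Delta w(t)|_2 \leq \|U_R(t)\|_H < R$ for all $t \in [0,T_0]$. On this time interval the cutoff is inactive, $h_R(w(t)) = h(w(t))$, so $U_R$ solves the original equation \eqref{PDE4} on $[0,T_0]$, giving the desired local strong solution with $U(t) \in \D(\mathscr{A})$ throughout.

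Uniqueness of the local strong solution follows from the accretivity estimate already used in Step 1 of the proof of Lemma~\ref{Lem:GLS}: any two strong solutions $U, \hat U$ on $[0,T_0]$ with the same initial data stay in a bounded set of $H$ (by the energy identity), hence both take values in $B_{R'}$ for some $R'$; on that set $h$ is Lipschitz, so the computation \eqref{GLS1}--\eqref{GLS7} gives $\frac{d}{dt}\|U - \hat U\|_H^2 \leq C\|U - \hat U\|_H^2$ in the sense of distributions, and Gr\"onwall forces $U \equiv \hat U$. The main obstacle — though a technical rather than conceptual one — is verifying cleanly that the truncated operator $h_R$ is genuinely globally Lipschitz from $H^2_0(\Gamma)$ to $L^2(\Gamma)$: one must control $|h_R(w_1) - h_R(w_2)|_2$ uniformly, which requires combining the local Lipschitz bound for $h$ on $B_{2R}$, the Lipschitz continuity of the cutoff $\chi$, the embedding $H^2_0(\Gamma) \hookrightarrow L^\infty(\Gamma)$ to bound $|w|_\infty$ in terms of $|\Delta w|_2$, and the (elementary but slightly delicate) Lipschitz property of the radial retraction. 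Everything else is a direct invocation of Lemma~\ref{Lem:GLS} together with a short Gr\"onwall argument, so the proof is essentially the assembly of these standard ingredients.
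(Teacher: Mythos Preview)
Your proposal is correct and follows essentially the same approach as the paper: truncate $h$ via radial projection onto a ball in $H^2_0(\Gamma)$ of radius determined by $E(0)$, apply Lemma~\ref{Lem:GLS} to the truncated problem, then use the energy identity and Gr\"onwall to find $T_0>0$ on which the solution stays inside the ball (so the truncation is inactive and the truncated solution solves \eqref{PDE4}). The only minor slip is that your bound $E_R(t)\le E_R(0)e^{Ct}$ should read $E_R(t)\le (E_R(0)+C_0 t)e^{Ct}$ to account for $h(0)\neq 0$, but this does not affect the conclusion.
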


\begin{proof}

Recall from the previous subsection $V=H^1_{\G_0}(\O)\times H^2_0(\G)$ and define
\begin{align*}
h^K(w)=\begin{cases}h(w)&\text{ if } | \Delta w |_{2}\leq K, \vspace{.05in}\\h\left(\frac{Kw}{  | \Delta w |_{2}}\right)&\text{ if } | \Delta w |_{2} >K,
\end{cases}
\end{align*}
where $K$ is a positive constant such that $K^2\geq 4 E(0)+1$, where the energy $E(t)$ is given by 
\begin{align*}E(t)=\frac{1}{2}\left(\|u_t(t)\|_2^2+|w_t(t)|_2^2 +\|\grad u(t)\|_2^2+ |\Delta w(t)|_2^2\right).
\end{align*}
With the truncated source, we consider the following ($K$) problem:
\begin{align*}
(K)\begin{cases}u_{tt}+A(u-Rw_t)=0 &\text{ in } \O \times (0,T),\\[2mm]
w_{tt}+\Delta^2w+\g u_t=h^K(w)&\text{ in }\G\times(0,T),\\[2mm]
(u(0),u_t(0))=(u_0,u_1)\in H^1_{\G_0}(\O)\times L^2(\O),\\[2mm]
(w(0),w_t(0))=(w_0,w_1)\in H^2_0(\G)\times L^2(\G).
\end{cases}
\end{align*}
We note here that for each such $K$, the operator $h^K:H^2_0(\G)\lra L^2(\G)$ is globally Lipschitz continuous (see, e.g., \cite{CEL1}), and therefore by Lemma \ref{Lem:GLS}, the $(K)$ problem has a unique global strong solution $U_K\in W^{1,\infty}(0,T;H)$ for any $T>0$ provided the initial datum $U_0\in \D(\mathscr{A})$.

The analysis that follows considers one ($K$) problem and thus for cleanliness we express $(u_K(t),w_K(t))$ as $(u(t),w(t))$.  
Since $u_t\in H^1_{\G_0}(\O)$ and $w_t\in H^2_0(\G)$, we may use the multiplier $u_t$ and $w_t$ on the $(K)$ problem and obtain the following energy identity:
\begin{align}\label{LLS1}
&E(t)=E(0)+\int_0^t\int_\G h^K(w)w_td\G d\tau.
\end{align}
Note that we have local Lipschitz constant $L_K$ for the function $h$ on the ball $\{(u,w)\in V:\|(u,w)\|_V\leq K \}$.  Note also that $\left|\Delta\left(\frac{Kw}{|\Delta w|_2}\right)\right|\leq K$ for all $w\in H^2_0(\G)$.  Thus $L_K$ further suffices as a global Lipschitz constant for $h^{K}$.

By H\"{o}lder's and Young's inequalities, we now have:
\begin{align}\label{LLS2}
\int_0^t\int_{\Gamma} & h^K(w)w_t dx d\tau \leq\int_0^t\left|h^K(w)\right|_2|w_t|_2d\tau\notag\\
&\leq\frac{1}{2}\int_0^t|w_t|_2^2d\tau+\frac{1}{2}\int_0^t\left|h^K(w)\right|_2^2d\tau\notag\\
&\leq\frac{1}{2}\int_0^t|w_t|_2^2d\tau + \int_0^t  \left( \left|h^K(w)-h^K(0)\right|_2^2+\left|h^K(0)\right|_2^2\right) d\tau\notag\\
&\leq\frac{1}{2}\int_0^t|w_t|_2^2d\tau + L_K^2\int_0^t\|w\|_{2,\G}^2d\tau + t|h(0)|^2|\G|.
\end{align}
Let $C_0 = |h(0)|^2|\G|$, $C_1=\max\left\{2L_K^2 ,1 \right\}$, and in turn select
\begin{align}\label{LLS9}
T_0=\min\left\{\frac{1}{4C_0},\frac{1}{C_1}\log2\right\}.
\end{align}
By combining \eqref{LLS1}-\eqref{LLS9}, one has
\begin{align}\label{LLS6}
E(t)&\leq E(0)+\frac{1}{2}\int_0^t |w_t|_2^2d\tau + L_K^2\int_0^t\|w\|_{2,\G}^2d\tau + t|h(0)|^2|\G|\notag\\
&\leq E(0)+C_0T_0+C_1\int_0^tE(\tau)d\tau,
\end{align}
for all $t\in[0,T_0]$.  Thus by Gronwall's inequality, one has
\begin{align}\label{LLS8}
E(t)\leq(E(0)+C_0T_0)e^{C_1t}\text{ for all }t\in [0,T_0].
\end{align}
Note our selection $K$ such that $K^2\geq 4E(0)+1$.  Thus it follows from \eqref{LLS8} and (\ref{LLS9}) that
\begin{align}\label{LLS10}
E(t)\leq2\left(E(0)+\frac{1}{4}\right)\leq\frac{K^2}{2},
\end{align}
for all $t\in[0,T_0]$.  This implies that $\|(u(t),w(t))\|_V\leq K$, for all $t\in[0,T_0]$, and therefore $h^K(w)=h(w)$ on the time interval $ [0,T_0]$.  
By the uniqueness of solutions to the ($K$) problem, this solution coincides with the solution of system \eqref{PDE2} for $t\in[0,T_0]$.
From (\ref{LLS9}), we see that $T_0$ depends on $K$, which is chosen to satisfy $K^2\geq 4E(0)+1$. Hence, $T_0$ depends on $E(0)$. This completes the proof of Lemma \ref{Lem:LLS}. Thus, the local well-posedness of strong solutions stated in Theorem~\ref{thm:strong} follows.
\end{proof}

\vspace{0.1 in}

\subsection{Global Existence of Strong Solutions}
In this subsection, we show that if $h$ is bounded by a linear function, i.e., $|h(s)| \leq c(|s|+1)$ for all $s\in \mathbb R$, 
then the local strong solution can be extended to a global strong solution for all $t\geq 0$. 

\begin{prop}\label{prop:ge}
Assume $h\in C^1(\mathbb R)$ such that $|h(s)|\leq c(|s|+1)$ for all $s\in\R$ where $c>0$ is a constant. Let $U=(u,w,u_t,w_t)$ be a strong solution of \eqref{PDE4} on $[0,T]$. Then the quadratic energy $E(t)$ satisfies
\begin{align}\label{GE2}
E(t)\leq\left(E(0)+C t\right)e^{C t}, \text{ for all}\;\; t\in[0,T].
\end{align}
\end{prop}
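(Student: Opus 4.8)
The plan is to start from the energy identity \eqref{energy}, which holds for strong solutions by Theorem~\ref{thm:strong}, and estimate the source term contribution using the linear growth bound on $h$. First I would write
\begin{align*}
E(t) = E(0) + \int_0^t \int_\G h(w) w_t \, d\G \, d\tau,
\end{align*}
and then bound the double integral. Using $|h(s)| \leq c(|s|+1)$ pointwise, Hölder's inequality on $\G$, and Young's inequality, I would estimate
\begin{align*}
\int_\G h(w) w_t \, d\G \leq |h(w)|_2 |w_t|_2 \leq c\big(|w|_2 + |\G|^{1/2}\big)|w_t|_2 \leq C\big(|w|_2^2 + |w_t|_2^2 + 1\big),
\end{align*}
absorbing $|\G|$ into the constant. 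The remaining task is to control $|w(\tau)|_2^2$ by the energy; here I would use the embedding $H^2_0(\G) \hookrightarrow L^2(\G)$ (or Poincaré on $\G$) to get $|w(\tau)|_2^2 \leq C |\Delta w(\tau)|_2^2 \leq C E(\tau)$, and likewise $|w_t(\tau)|_2^2 \leq 2 E(\tau)$.

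Combining these gives
\begin{align*}
E(t) \leq E(0) + Ct + C\int_0^t E(\tau) \, d\tau, \quad t \in [0,T],
\end{align*}
after which Gronwall's inequality yields $E(t) \leq (E(0) + Ct)e^{Ct}$, which is exactly \eqref{GE2}. Once this a priori bound is in hand, the extension to a global strong solution follows by the standard continuation argument: if the maximal existence time $T_{\max}$ were finite, the bound \eqref{GE2} would show $E(t)$ stays bounded on $[0,T_{\max})$, so one could restart the local existence result from a time close to $T_{\max}$ with uniformly bounded energy (recalling from Lemma~\ref{Lem:LLS} that the local existence time $T_0$ depends only on $E(0)$), contradicting maximality; hence $T_{\max} = \infty$ and $T_0$ in Theorem~\ref{thm:strong} may be taken arbitrarily large.

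I do not expect any serious obstacle here: the argument is entirely routine once the energy identity \eqref{energy} is available, and the only mild point of care is making sure the constants $C$ in \eqref{GE2} are uniform and that the $L^2(\G)$ norms of $w$ and $w_t$ are genuinely dominated by $E$, which follows immediately from the definition of $E(t)$ and the Poincaré-type inequality on the plate domain $\G$. The continuation step is standard for semigroup-generated dynamics and relies only on the fact, already recorded in the proof of Lemma~\ref{Lem:LLS}, that the local existence time depends solely on the initial quadratic energy.
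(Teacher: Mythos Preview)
Your proof of the proposition is correct and follows essentially the same route as the paper: start from the energy identity \eqref{energy}, bound $\int_\G h(w)w_t\,d\G$ via H\"older, Young, the linear growth of $h$, and the Poincar\'e inequality $|w|_2\le C|\Delta w|_2$, arrive at $E(t)\le E(0)+Ct+C\int_0^t E(\tau)\,d\tau$, and conclude by Gronwall. The continuation discussion you append is not part of Proposition~\ref{prop:ge} itself; the paper treats it separately (by fixing $T$, choosing $K$ from the a~priori bound, and iterating Lemma~\ref{Lem:LLS} in steps of fixed length $T_0$ rather than arguing by contradiction with a maximal time), but both arguments are standard and equivalent.
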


\begin{proof}
From (\ref{energy}), we have the energy identity: 
\begin{align}\label{GE3}
&E(t)=E(0)+\int_0^t\int_\G h(w)w_td\G d\t,  \text{ for}\; t\in[0,T].
\end{align}
To estimate the source term on the right-hand side of \eqref{GE3}, we employ H\"older's and Young's inequalities as follows:
\begin{align}\label{GE5}
\left|\int_0^t\int_\G h(w)w_td\G d\tau\right|&\leq\frac{1}{2}\int_0^t|h(w)|_2^2 d\tau +\frac{1}{2}\int_0^t|w_t|_{2}^{2}d\tau     \notag\\
&\leq C\int_0^t|w|_2^2d\tau+C t + \int_0^t E(\tau)d\tau     \notag\\
&\leq C\int_0^t E(\tau) d\tau + C  t,
\end{align}
where we have used the Poincar\'e inequality $|w|_2 \leq C|\Delta w|_2$.

It follows from \eqref{GE3} and \eqref{GE5} that, for all $t\in[0,T]$,
\begin{align}\label{GE9}
&E(t)\leq E(0) + C t + C\int_0^t E(\tau) d\tau.
\end{align}

By Gronwall's inequality, we conclude that
\begin{align}\label{GE12}
E(t)\leq\left(E(0)+C t\right)e^{C t}, \text{ for all} \; t\in[0,T].
\end{align}
\end{proof}

We now prove that the local strong solution can be extended to a global strong solution under the assumption that $h\in C^1(\mathbb R)$ such that $|h(s)| \leq c(|s| + 1)$, using Proposition~\ref{prop:ge} and Lemma~\ref{Lem:LLS}.

Fix an arbitrary $T > 0$. Define
\begin{align} \label{def-M}
M := \left(E(0) + C T\right)e^{C T}.
\end{align}
Choose
\begin{align} \label{def-K}
K := (4M + 1)^{1/2},
\end{align}
so that $K^2 = 4M + 1 \geq 4E(0) + 1$. By Lemma~\ref{Lem:LLS}, there exists a unique local strong solution to equation \eqref{PDE4} on $[0, T_0]$, where, due to~\eqref{LLS9},
$T_0=\min\left\{\frac{1}{4C_0},\frac{1}{C_1}\log2\right\}$, with $C_0 = |h(0)|^2|\G|$ and $C_1=\max\left\{2L_K^2 ,1 \right\}$. Note that $T_0$ depends on $K$.

From Proposition~\ref{prop:ge}, we obtain
\begin{align} \label{ext-1}
E(t) \leq \left(E(0) + C T_0\right)e^{C T_0} \leq M, \quad \text{for all } t \in [0, T_0],
\end{align}
provided $T_0 \leq T$, by using~\eqref{def-M}.

Next, we extend the strong solution from $T_0$ to $2T_0$. Using the same $K$ as in~\eqref{def-K}, we have $K^2 = 4M + 1 \geq 4E(T_0) + 1$ by~\eqref{ext-1}. Applying Lemma~\ref{Lem:LLS} again, we extend the solution to $[T_0, 2T_0]$. Repeating this process iteratively, we extend the solution up to time $T$. The key observation is that, at each extension step, the same $K$ is used, and the energy remains bounded by $M$ before time $T$. Since $T$ is arbitrary, the strong solution is global in time. This completes the proof of the global existence of a strong solution under the assumption  that $h\in C^1(\mathbb R)$ such that $|h(s)| \leq c(|s| + 1)$.

In conclusion, we have proved Theorem~\ref{thm:strong}, that is, the local and global well-posedness of strong solutions 
to equation \eqref{PDE4} and to system \eqref{PDE} as well.

\vspace{0.1 in}

\section{Well-posedness of Weak Solutions} \label{sec-weak}

This section is devoted to proving Theorem \ref{thm:localexist}: local and global well-posedness of weak solutions to system (\ref{PDE}).

\subsection{Local Existence of Weak Solutions} \label{sub-local}
Given a prescribed $U_0\in H$, we will establish the existence of a local weak solution. The proof builds on the local existence of strong solutions established in the previous section. 

\noindent{}\textbf{\underline{Step 1: Approximate solutions.}}\vspace{1mm}
\\
\noindent{} 
Let $U_0=(u_0,u_1,w_0,w_1)\in H$. Since the space of test functions $\mathscr{D}(\O)^4$ is dense in $H$, for each $U_0\in H$ there exists a sequence of functions $U^n_0=(u^n_0,u^n_1,w^n_0,w^n_1)\in \mathscr{D}(\O)^4$ such that $U^n_0\to U_0$ in $H$.  
Set $U=(u,w,u_t,w_t)$ and consider the system
\begin{align}\label{ASPL2}
U_t+\mathscr{A}U=0, \quad U(0)=(u^n_0,u^n_1,w^n_0,w^n_1)\in\mathscr{D}(\O)^4.
\end{align}
For each $n$, equation \eqref{ASPL2} has a strong local solution $U^n=(u^n,w^n,u_t^n,w_t^n)\in W^{1,\infty}(0,T_0;H)$ such that $U^n(t)\in\D(\mathscr{A})$ for $t\in [0,T_0]$,
by Theorem \ref{thm:strong}.

Note in the previous line that $T_0$ does not depend on the selection $n$.  This is because in the proof of Lemma \ref{Lem:LLS}, $K$ depends only on the initial energy.  As $U^n_0\lra U_0$ in $H$, we can select $K$ large enough such that $K^2\geq 4E(0)+1$ and $K^2\geq 4E^n(0)+1$ for all $n$, where 
\begin{align*}
E^n(t)=\frac{1}{2}\left(\|u^n_t(t)\|_2^2+|w^n_t(t)|_2^2 +\|\grad u^n(t)\|_2^2+ |\Delta w^n(t)|_2^2\right).
\end{align*}
As $T_0$ in the proof of Lemma \ref{Lem:LLS} depends only on the value of $K$, we indeed can establish a uniform $T_0>0$ for all $n$.

Now, by \eqref{LLS10}, we know $E^n(t)\leq \frac{K^2}{2}$ for all $t\in [0,T_0]$, which implies that,
\begin{align}\label{ASPL3}
\|U^n(t)\|_H^2=\|\nabla u^n(t)\|_2^2+|\Delta w^n(t)|_2^2+\|u^n_t(t)\|_2^2+|w^n_t(t)|_2^2\leq K^2,
\end{align}
for all $t\in[0,T_0]$, for all $n$.  

Lastly, if $\phi$ and $\psi$ satisfy the conditions imposed on test functions in Definition \ref{def:weaksln}, then we can test the approximate system \eqref{ASPL2} with  $\phi$ and $\psi$ to obtain:
\begin{align}\label{ASPL7}
(u_{t}^n(t),  \phi(t))_\O & - (u_t^n(0),\phi(0))_\O-\int_0^t ( u^n_t(\tau), \phi_t(\tau) )_\O d\tau
+\int_0^t (\nabla u^n(\tau), \nabla\phi(\tau) )_\O d\tau \notag \\
&-\int_0^t  (w_t^n(\tau), \g\phi(\tau) )_\G d\tau=0,
\end{align}
and
\begin{align}\label{ASPL8}
(w_t^n(t) & + \g u^n(t),\psi(t) )_\G  -(w_t^n(0) +\g u^n(0) ,\psi(0))_\G -\int_0^t (w_t^n(\tau), \psi_t(\tau) )_\G d\tau \notag \\
& -\int_0^t (\g u^n(\tau), \psi_t(\tau) )_\G d\tau+\int_0^t (\Delta w^n (\tau), \Delta\psi(\tau) )_\G d\tau \notag \\
&=\int_0^t\int_{\G}h(w^n(\tau))\psi(\tau) d\G d\tau,
\end{align}
for all $t\in[0,T_0]$, for all $n$.
\medskip

\noindent{}\textbf{\underline{Step 2: Passage to the limit.}}\vspace{1mm}
\\
We aim to prove that there exists a subsequence of $\{U^n\}$, labeled again as $\{U^n\}$, that converges to a weak solution of problem \eqref{PDE}. In what follows, we focus on passing to the limit in \eqref{ASPL7} and \eqref{ASPL8}.\\
\\
First, we note that \eqref{ASPL3} shows $\{U^n\}$ is bounded in $L^\infty(0,T_0;H)$.  Thus by Alaoglu's Theorem, there exists a subsequence, labeled as $\{U^n\}$, such that
\begin{align}\label{ASPL9}
U^n\lra U\text{ weakly}^*\text{ in }L^\infty(0,T_0;H).
\end{align}
We note here that the imbedding $H^1_{\G_0}(\O)\hookrightarrow H^{1-\epsilon}_{\G_0}(\O)$ is compact. 
Then, by Aubin-Lions-Simon Compactness Theorem, there exists a subsequence, reindexed by $\{ u^n\}$,  such that 
\begin{align}\label{ASPL10}
u^n\lra u\text{ strongly in } C([0,T_0];H^{1-\epsilon}_{\G_0}(\O)).
\end{align}
Similarly, we deduce that there exists a subsequence such that
\begin{align}\label{ASPL11}
w^n\lra w\text{ strongly in } C([0,T_0];H^{2-\epsilon}_0(\G)).
\end{align}
Now since $H^{1-\epsilon}(\O)\hookrightarrow L^2(\G)$ for sufficiently small $\epsilon>0$, it follows from \eqref{ASPL10} that
\begin{align}\label{ASPL11.5}
\g u^n\lra \g u\text{ strongly in } C([0,T_0]; L^2(\G)).
\end{align}

Using the convergence results \eqref{ASPL9}--\eqref{ASPL11.5}, we can pass to the limit in all linear terms of \eqref{ASPL7} and \eqref{ASPL8}. It remains to show the convergence of the nonlinear term:
\begin{align}\label{ASPL38}
\lim_{n\ra\infty}\int_0^t\int_\G h(w^n)\psi d\G d\tau=\int_0^t\int_\G h(w)\psi d\G d\tau.
\end{align}
By Proposition \ref{hprop}, $h:H^{2-\epsilon}_0(\G)\lra L^2(\G)$ is locally Lipschitz.  Since $\psi\in C([0,T_0];H^2_0(\G))$, we can conclude by way of H\"{o}lder's inequality that
\begin{align}\label{ASPL32}
&\int_0^t\int_\G|h(w^n)-h(w)||\psi|d\G d\tau\notag\\
&\hspace{5mm}\leq\left(\int_0^t\int_\G|h(w^n)-h(w)|^2d\G d\tau\right)^{\frac{1}{2}}  \left(\int_0^t\int_\G|\psi|^2d\G d\tau\right)^{\frac{1}{2}}\notag\\
&\hspace{5mm}\leq C(K)\left(\int_0^t\|w^n-w\|_{H^{2-\epsilon}_0(\G)}^{2} d\tau \right)^{\frac{1}{2}} \longrightarrow 0,
\end{align}
where $C(K)$ is a constant depending on $K$, the uniform bound from \eqref{ASPL3}.  
The convergence in (\ref{ASPL32}) is due to \eqref{ASPL11}. This verifies \eqref{ASPL38}.

In summary, we have successfully passed to the limit in \eqref{ASPL7} and \eqref{ASPL8}, thereby establishing the existence of a solution satisfying \eqref{wkslnwave} and \eqref{wkslnplt}.

\medskip

\noindent{}\textbf{\underline{Step 3: Regularity.}}\vspace{1mm}
\\
To finish the existence portion of the proof of Theorem \ref{thm:localexist}, we must establish the desired regularity on the solutions.  To accomplish this, we consider the difference of two solutions to the approximate problem \eqref{ASPL2}, $U^n$ and $U^j$.  For ease of notation, we put $\tilde{u}=u^n-u^j$ and $\tilde{w}=w^n-w^j$.  Note that as $U^n$ and $U^j$ are solutions to \eqref{ASPL2}, we must have that 
\begin{align}\label{appprob}
\begin{cases}\tilde{u}_{tt}-\Delta\tilde{u}=0&       \text{in} \;  \Omega\times(0,T_0),\\
\tilde{w}_{tt}+\Delta^2\tilde{w}+\g \tilde u_t=h(w^n)-h(w^j)&     \text{in} \;  \G\times(0,T_0).
\end{cases}
\end{align}
As $U^n$, $U^j\in W^{1,\infty}(0,T_0;H)$ with $U^n(t)$, $U^j(t)\in\D(\mathscr{A})$, we have $\tilde{u}_t\in W^{1,\infty}(0,T_0;L^2(\O))$ with $\tilde{u}_t(t)\in H^1_{\G_0}(\O)$. 
Thus, we can multiply the first equation in \eqref{appprob} by $\tilde{u}_t(t)$ and integrate over both $\O$ and $(0,T_0)$ to conclude
\begin{align}\label{ASPL18}
&\frac{1}{2}\left(\|\tilde{u}_t(t)\|_2^2+\|\tilde{u}(t)\|_{1,\O}^2\right)-\int_0^t(\tilde{w}_t,\gamma\tilde{u}_t)_\G d\tau=\frac{1}{2}\left(\|\tilde{u}_t(0)\|_2^2+\|\tilde{u}(0)\|_{1,\O}^2\right).
\end{align}
Similarly we have $\tilde{w}_t\in W^{1,\infty}(0,T_0;L^2(\G))$ with $\tilde{w}_t(t)\in H^2_0(\G)$, and thus can multiply the second equation in \eqref{appprob} by $\tilde{w}_t(t)$ and integrate over both $\G$ and $(0,T_0)$ to conclude
\begin{align}\label{ASPL19}
&\frac{1}{2}\left(|\tilde{w}_t(t)|_2^2+\|\tilde{w}(t)\|_{2,\G}^2\right)+\int_0^t(\gamma \tilde{u}_t,\tilde{w}_t)_\G d\tau\notag\\
&\hspace{10mm}=\frac{1}{2}\left(|\tilde{w}_t(0)|_2^2+\|\tilde{w}(0)\|_{2,\G}^2\right)+\int_0^t\int_\G(h(w^n)-h(w^j))\tilde{w}_td\G d\tau.
\end{align}

Define $\tilde{E}(t)=\frac{1}{2}\left(\|\tilde{u}_t(t)\|_2^2 +\|\tilde{u}(t)\|_{1,\O}^2+|\tilde{w}_t(t)|_2^2+\|\tilde{w}(t)\|_{2,\G}^2\right)$ for $t\in[0,T_0]$.  Then from \eqref{ASPL18} and \eqref{ASPL19}, we have
\begin{align}\label{ASPL20}
\tilde{E}(t)&=\tilde{E}(0)+\int_0^t\int_\G(h(w^n)-h(w^j))\tilde{w}_t d\G d\tau\notag\\
&\leq \tilde{E}(0)+\int_0^{T_0} \int_\G|h(w^n)-h(w^j)||\tilde{w}_t|d\G d\tau.
\end{align}

We now wish to show that both terms on the right-hand side of \eqref{ASPL20} converge to 0 as $n,j\ra\infty$.  First, since $\lim_{n\ra\infty}\|u^n_0-u_0\|_{1,\O}=\lim_{n\ra\infty}\|u^n_1-u_1\|_2=\lim_{n\ra\infty}\|w^n_0-w_0\|_{2,\G}=\lim_{n\ra\infty}|w^n_1-w_1|_2=0$, we obtain
\begin{align}\label{ASPL21}
\lim_{n,j\ra\infty}\tilde{E}(0)=0.
\end{align}

To address the second term, we begin by noting that
\begin{align}\label{ASPL29}
&\int_0^{T_0} \int_\G|h(w^n)-h(w^j)||\tilde{w}_t|d\G d\tau\notag\\
&\hspace{5mm}\leq\int_0^{T_0}\int_\G|h(w^n)-h(w)||\tilde{w}_t|d\G d\tau+\int_0^{T_0}\int_\G|h(w)-h(w^j)||\tilde{w}_t|d\G d\tau.
\end{align}
Without loss of generality, we consider the $w^n$ term in the right side of \eqref{ASPL29}.  
As $\tilde{w}_t\in W^{1,\infty}(0,T_0;L^2(\G))$, we can replace $\psi$ in \eqref{ASPL32} with $\tilde{w}_t$ to conclude that 
\begin{align}\label{ASPL30}
&\lim_{n\ra\infty}\int_0^{T_0}\int_\G|h(w^n)-h(w)||\tilde{w}_t|d\G d\tau=0.
\end{align}
Combining \eqref{ASPL29} and \eqref{ASPL30} yields that
\begin{align}\label{ASPL28}
\lim_{n,j\ra\infty}\int_0^{T_0} \int_\G |h(w^n)-h(w^j)||\tilde{w}_t|d\G d\tau=0.
\end{align}
Note now that \eqref{ASPL20}, \eqref{ASPL21}, and \eqref{ASPL28} give us that
\begin{align}\label{ASPLextra2}
\lim_{n,j\ra \infty} \left[\sup_{t\in [0,T_0]}\tilde{E}(t) \right] =0.
\end{align}
Note that \eqref{ASPLextra2} implies that
\begin{align*}
\begin{split}
&\lim_{n,j\ra\infty}\|u^n(t)-u^j(t)\|_{1,\O}^2=\lim_{n,j\ra\infty}\|\tilde{u}(t)\|_{1,\O}^2=0\text{ uniformly in }t\in[0,T_0];\\
&\lim_{n,j\ra\infty}\|u^n_t(t)-u^j_t(t)\|_{2}^2=\lim_{n,j\ra\infty}\|\tilde{u}_t(t)\|_{2}^2=0\text{ uniformly in }t\in[0,T_0];\\
&\lim_{n,j\ra\infty}\|w^n(t)-w^j(t)\|_{2,\G}^2=\lim_{n,j\ra\infty}\|\tilde{w}(t)\|_{2,\G}^2=0\text{ uniformly in }t\in[0,T_0];\\
&\lim_{n,j\ra\infty}|w^n_t(t)-w^j_t(t)|_{2}^2=\lim_{n,j\ra\infty}|\tilde{w}_t(t)|_{2}^2=0\text{ uniformly in }t\in[0,T_0],
\end{split}
\end{align*}
and hence
\begin{align}\label{ASPL44}
\begin{cases}
u^n(t)\lra u(t)\text{ in }H^1_{\G_0}(\O)\text{ uniformly on }[0,T_0],\\
u^n_t(t)\lra u_t(t)\text{ in }L^2(\O)\text{ uniformly on }[0,T_0],  \\
w^n(t)\lra w(t)\text{ in }H^2_{0}(\G)\text{ uniformly on }[0,T_0], \\
w_t^n(t)\lra w_t(t)\text{ in }L^2(\G)\text{ uniformly on }[0,T_0].
\end{cases}
\end{align}
Since $U^n\in W^{1,\infty}(0,T_0;H)$, by \eqref{ASPL44} we conclude
\begin{align*}
&u\in C([0,T_0];H^1_{\G_0}(\O)), \quad u_t\in C([0,T_0];L^2(\O)),\\
&w\in C([0,T_0];H^2_0(\G)), \quad w_t\in C([0,T_0];L^2(\G)).
\end{align*}
Moreover, \eqref{ASPL44} shows $u^n(0)\lra u(0)$ in $H^1_{\G_0}(\O)$.  Since $u^n(0)=u^n_0\lra u_0$ in $H^1_{\G_0}(\O)$, then the initial condition $u(0)=u_0$ holds.  Also, since $u^n_t(0)\lra u_t(0)$ in $L^2(\O)$ and $u^n_t(0)=u^n_1\lra u_1$ in $L^2(\O)$, we obtain $u_t(0)=u_1$. Similarly, we find that $w(0)=w_0$ and $w_t(0)=w_1$.  This completes the proof of the local existence statement in Theorem \ref{thm:localexist} for weak solutions.

\vspace{0.1 in}

\subsection{Energy Identity for Weak Solutions} \label{sub-EI}
\noindent{}In this section, we derive the energy identity for weak solutions stated in Theorem \ref{thm:localexist}.  As $u_t$ and $w_t$ are not regular enough to be used as test functions in \eqref{wkslnwave} and \eqref{wkslnplt} for weak solutions, we shall use the difference quotients $D_hu$ and $D_hw$ and their well-known properties that appeared in \cite{KL} and later in \cite{GR,RS2,Saw}. 

\subsubsection{Properties of the Difference Quotient}
Let $X$ be a Banach space.  For any function $u\in C([0,T];X)$ and $h>0$, we define the symmetric difference quotient by:
\begin{align}\label{DQ1}
D_hu(t)=\frac{u_e(t+h)-u_e(t-h)}{2h},
\end{align}
where $u_e(t)$ denotes the extension of $u(t)$ to $\R$ given by:
\begin{align}\label{DQ2}
u_e(t)=\begin{cases}u(0)&\text{ for }t\leq 0\vspace{.5mm},\\u(t)&\text{ for }t\in(0,T)\vspace{.5mm},\\u(T)&\text{ for }t\geq T.\end{cases}
\end{align}
The results in the following proposition have been established by Koch and Lasiecka \cite{KL}.
\begin{prop}[\cite{KL}]  \label{prop:DQ1}
Let $u\in C([0,T];X)$ where $X$ is a Hilbert space with inner product $(\cdot,\cdot)_X$.  Then,
\begin{align}\label{DQ3}
\lim_{h\ra\infty}\int_0^T(u,D_hu)_Xdt=\frac{1}{2}\left(\|u(T)\|_X^2-\|u(0)\|_X^2\right).
\end{align}
If, in addition, $u_t\in C([0,T];X)$, then
\begin{align}\label{DQ4}
\int_0^T(u_t,(D_hu)_t)_Xdt=0,\text{ for each }h>0,
\end{align}
and, as $h\ra0$,
\begin{align}\label{DQ5}
D_hu(t)\lra u_t(t)\text{ weakly in }X,\text{ for every }t\in(0,T),
\end{align}
\begin{align}\label{DQ6}
D_hu(0)\lra \frac{1}{2}u_t(0)\text{ and }D_hu(T)\lra\frac{1}{2}u_t(T)\text{ weakly in }X.
\end{align}
\end{prop}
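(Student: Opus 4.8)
The plan is to obtain all four assertions directly from the definition \eqref{DQ1}--\eqref{DQ2} by elementary manipulations, using only two standard facts about Bochner integrals: since $u\in C([0,T];X)$ with $u_t\in C([0,T];X)$, the map $u$ is continuously differentiable and $u(b)-u(a)=\int_a^b u_t(s)\,ds$ for $0\le a\le b\le T$; and the average of a continuous $X$-valued function over an interval shrinking to a point converges strongly to the value there. All limits below are taken as $h\to 0^+$. The only real care needed is bookkeeping across the endpoints $t=0,T$, where $u_e$ is constant and hence $u_e'=0$.

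For \eqref{DQ3} I would split $\int_0^T(u,D_hu)_X\,dt=\frac1{2h}\int_0^T(u_e(t),u_e(t+h))_X\,dt-\frac1{2h}\int_0^T(u_e(t),u_e(t-h))_X\,dt$ (using $u=u_e$ on $[0,T]$), substitute $s=t-h$ in the second integral, and use symmetry of $(\cdot,\cdot)_X$ to rewrite it as $\frac1{2h}\int_{-h}^{T-h}(u_e(s),u_e(s+h))_X\,ds$. Subtracting, the common integrand $\phi_h(t):=(u_e(t),u_e(t+h))_X$ cancels on $[0,T-h]$, leaving $\frac1{2h}\int_{T-h}^{T}\phi_h-\frac1{2h}\int_{-h}^{0}\phi_h$. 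On $[T-h,T]$ one has $u_e(t+h)=u(T)$, so continuity of $u$ makes the first term tend to $\tfrac12\|u(T)\|_X^2$; on $[-h,0]$ one has $u_e(t)=u(0)$, and after substituting $r=t+h$ the second term tends to $\tfrac12\|u(0)\|_X^2$. Subtracting yields \eqref{DQ3}.

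For \eqref{DQ4} (for each fixed $h>0$) I would note that since $u_t\in C([0,T];X)$ the function $u_e$ is Lipschitz on $\R$ with a.e.\ derivative $w$, the extension of $u_t$ by zero outside $(0,T)$; hence $D_hu(t)=\frac1{2h}\int_{t-h}^{t+h}w(s)\,ds$ and $(D_hu)_t(t)=\frac{w(t+h)-w(t-h)}{2h}$ for a.e.\ $t$. Since $w$ vanishes a.e.\ off $[0,T]$ while $u_t=w$ on $(0,T)$, $\int_0^T(u_t,(D_hu)_t)_X\,dt$ equals the full-line integral $\frac1{2h}\int_{\R}\big(w(t),\,w(t+h)-w(t-h)\big)_X\,dt$, which vanishes because the change of variables $s=t+h$ together with symmetry of the inner product gives $\int_{\R}(w(t),w(t+h))_X\,dt=\int_{\R}(w(t),w(t-h))_X\,dt$ --- i.e.\ the symmetric difference quotient is skew-symmetric on $L^2(\R;X)$. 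One can also see it by the endpoint computation used for \eqref{DQ3}: the strips $[T-h,T]$ and $[-h,0]$ now contribute nothing because $w(t+h)$, resp.\ $w(t)$, vanishes a.e.\ there.

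Finally, for \eqref{DQ5}--\eqref{DQ6} I would fix $t\in(0,T)$ and, for $h<\min\{t,T-t\}$, write $D_hu(t)=\frac1{2h}(u(t+h)-u(t-h))=\frac1{2h}\int_{t-h}^{t+h}u_t(s)\,ds$, which tends \emph{strongly} in $X$ to $u_t(t)$ by continuity of $u_t$, hence weakly; for the endpoints, $u_e(-h)=u(0)$ and $u_e(T+h)=u(T)$ for $h>0$ give $D_hu(0)=\frac1{2h}\int_0^h u_t(s)\,ds\to\tfrac12 u_t(0)$ and $D_hu(T)=\frac1{2h}\int_{T-h}^{T}u_t(s)\,ds\to\tfrac12 u_t(T)$, again strongly and hence weakly. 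I do not expect a genuine obstacle: the argument is elementary throughout, and the only delicate point is the endpoint bookkeeping in \eqref{DQ3}--\eqref{DQ4}, where the constancy of $u_e$ outside $[0,T]$ is exactly what produces the factor $\tfrac12$ in the boundary strips for \eqref{DQ3} while annihilating them for \eqref{DQ4}.
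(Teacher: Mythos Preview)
Your argument is correct. The telescoping computation for \eqref{DQ3}, the translation-invariance/skew-symmetry argument for \eqref{DQ4}, and the averaging-over-shrinking-intervals argument for \eqref{DQ5}--\eqref{DQ6} all go through as you describe; the endpoint bookkeeping that produces the factors $\tfrac12$ is handled cleanly. (You also silently corrected the evident typo $h\to\infty$ in \eqref{DQ3} to $h\to 0$.)

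There is nothing to compare against: the paper does not prove Proposition~\ref{prop:DQ1} at all --- it simply quotes the result from Koch and Lasiecka \cite{KL} and moves on. Your write-up therefore supplies a self-contained elementary proof where the paper relies on a citation, which is a net gain in exposition. One minor point worth making explicit for \eqref{DQ4}: strictly speaking $u_e$ is only Lipschitz, not $C^1$, at $t=0$ and $t=T$ (unless $u_t$ happens to vanish there), so $(D_hu)_t$ exists only a.e.; you say this, but it is worth emphasizing that the integral in \eqref{DQ4} is therefore to be read in the a.e.\ sense, which is harmless since the integrand is bounded and measurable.
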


\begin{prop}[\cite{GR}] \label{prop:DQ2} Let $X$ and $Y$ be Banach spaces.  Assume $u\in C([0,T];Y)$ and $u_t\in L^1(0,T;Y)\cap L^p(0,T;X)$, where $1\leq p<\infty$.  Then, $D_hu\in L^p(0,T;X)$ and $\|D_hu\|_{L^p(0,T;X)}\leq\|u_t\|_{L^p(0,T;X)}$.  Moreover, $D_hu\lra u_t$ in $L^p(0,T;X)$, as $h\ra 0$.
\end{prop}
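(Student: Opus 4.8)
The plan is to recognise $D_hu$ as a symmetric local average of $u_t$ (extended by zero past the endpoints $0$ and $T$), and then to invoke the two standard facts about such averages: that they are $L^p$-contractions and that they converge to the original function in $L^p$ for $1\le p<\infty$.

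First I would record the representation of the extension $u_e$ as an indefinite integral. Since $u\in C([0,T];Y)$ with $u_t\in L^1(0,T;Y)$, the fundamental theorem of calculus for Bochner integrals gives $u(t)=u(0)+\int_0^t u_t(s)\,ds$. Writing $g:=\chi_{(0,T)}u_t$, extended by $0$ to all of $\R$, one checks directly from \eqref{DQ2} that $u_e(t)=u(0)+\int_0^t g(s)\,ds$ for every $t\in\R$ (the integral vanishes for $t\le 0$ and equals $u(T)-u(0)$ for $t\ge T$). Here $g\in L^1(\R;Y)$ and, since $g$ vanishes off $(0,T)$, also $g\in L^1(\R;X)\cap L^p(\R;X)$; in particular its primitive is continuous into $X$. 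Subtracting the values at $t\pm h$ in \eqref{DQ1} then yields
\[
D_hu(t)=\frac{1}{2h}\int_{t-h}^{t+h}g(s)\,ds=:(A_hg)(t),\qquad t\in\R,
\]
so that $D_hu=(A_hg)\big|_{(0,T)}$, with $g=u_t$ on $(0,T)$.

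The norm bound then follows at once: for $p\ge 1$ the average in the display is taken against a probability measure, so Jensen's inequality gives $\|(A_hg)(t)\|_X^p\le\frac{1}{2h}\int_{t-h}^{t+h}\|g(s)\|_X^p\,ds$, and integrating in $t$ over $\R$ and applying Fubini gives $\|A_hg\|_{L^p(\R;X)}\le\|g\|_{L^p(\R;X)}$. Restricting to $(0,T)$ shows $D_hu\in L^p(0,T;X)$ with $\|D_hu\|_{L^p(0,T;X)}\le\|u_t\|_{L^p(0,T;X)}$. For the convergence $A_hg\to g$ in $L^p(\R;X)$: this is immediate for $\phi\in C_c(\R;X)$, since uniform continuity of $\phi$ makes $A_h\phi\to\phi$ uniformly while the supports of $A_h\phi-\phi$ stay in a fixed compact set for $h\le 1$; the general case follows by a $3\varepsilon$-argument using density of $C_c(\R;X)$ in $L^p(\R;X)$ (via simple functions) together with the contraction property just established. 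Restricting back to $(0,T)$ and recalling $g=u_t$ there gives $D_hu\to u_t$ in $L^p(0,T;X)$.

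The genuinely routine ingredients are the Bochner fundamental theorem of calculus and the Jensen/Fubini estimate; the one point deserving care is the vector-valued density step, where it is essential to work with the zero-extension $g$ on all of $\R$ rather than with $u_t$ on $[0,T]$, so that the averaging near the endpoints $t=0$ and $t=T$ causes no difficulty. Beyond this bookkeeping I do not anticipate any serious obstacle.
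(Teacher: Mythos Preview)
Your proof is correct and follows the standard route for this kind of result. The paper itself does not supply a proof of this proposition; it is quoted from the reference \cite{GR} and stated without argument, so there is no ``paper's own proof'' to compare against. Your identification of $D_hu$ with the symmetric average $A_hg$ of the zero-extension $g=\chi_{(0,T)}u_t$, followed by the Jensen/Fubini contraction estimate and the density-plus-contraction argument for $L^p$-convergence, is exactly the expected approach and is carried out cleanly. The only point worth flagging---and it is a quibble about the \emph{statement} rather than your argument---is that with $X$ and $Y$ unrelated Banach spaces one tacitly needs the $X$-valued and $Y$-valued Bochner integrals of $g$ to agree (e.g.\ via a continuous embedding $X\hookrightarrow Y$, which holds in every application in the paper); your proof is unaffected by this, since once the representation $D_hu=A_hg$ is accepted the rest takes place entirely in $L^p(\R;X)$.
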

\smallskip
\subsubsection{Proof of the Energy Identity.}
Throughout the proof, we fix $t\in(0,T_0]$ and let $(u,w)$ be a weak solution of the system \eqref{PDE} in the sense of Definition \ref{def:weaksln}.  Recall the regularity of $u$ and $w$.  We can define the difference quotient $D_hu(\tau)$ on $[0,t]$ as \eqref{DQ1}, i.e., $D_hu(\tau)=\frac{1}{2h}[u_e(\tau+h)-u_e(\tau-h)]$, where $u_e(\tau)$ extends $u(\tau)$ from $[0,t]$ to $\R$ as in \eqref{DQ2}. Similarly, we can define $D_h w(\tau)$ on $[0,t]$. By Proposition \ref{prop:DQ2}, with $X=Y=L^{2}(\G)$, we have
\begin{align}\label{EI2}
D_hw\in L^{2}(\G\times(0,t))\text{ and }D_hw\lra w_t\text{ in }L^{2}(\G\times(0,t)).
\end{align}
Moreover, since $u\in C([0,t];H^1_{\G_0}(\O))$ and $w\in C([0,t];H^2_0(\G))$, then 
\begin{align}\label{EI3}
D_hu\in C([0,t];H^1_{\G_0}(\O))\text{ and }D_hw\in C([0,t];H^2_0(\G)).
\end{align}
Since $u_t\in C([0,t];L^2(\O))$ and $w_t\in C([0,t];L^2(\O))$, we see that
\begin{align}\label{EI4}
(D_hu)_t\in L^1(0,t;L^2(\O))\text{ and }(D_hw)_t\in L^1(0,t;L^2(\G)).
\end{align}

Thus, \eqref{EI3}-\eqref{EI4} show that $D_hu$ and $D_hw$ satisfy the required regularity conditions to be suitable test functions in Definition \ref{def:weaksln}.  Therefore, by taking $\phi=D_hu$ in \eqref{wkslnwave} and $\psi=D_hw$ in \eqref{wkslnplt}, we obtain
\begin{align}\label{EI5}
&(u_{t}(t),D_hu(t))_\O - (u_1,D_hu(0))_\O-\int_0^t(u_t(\tau),(D_hu)_t(\tau))_\O d\tau\notag\\
&\hspace{5mm}+\int_0^t(u(\tau),D_hu(\tau))_{1,\O}d\tau-\int_0^t(w_t(\tau),\g D_hu(\tau))_\G d\tau=0,
\end{align}
and
\begin{align}\label{EI6}
&(w_t(t)+ \g u(t),D_hw(t))_\G-(w_1+ \g u(0),D_hw(0))_\G-\int_0^t(w_t(\tau),(D_hw)_t(\tau)_\G d\tau\notag\\
&\hspace{5mm}-\int_0^t(\g u(\tau),(D_hw)_t(\tau))_\G d\tau +\int_0^t(w(\tau),D_hw(\tau))_{2,\G}d\tau \notag\\
&\hspace{5mm}=\int_0^t\int_{\G}h(w(\tau))D_hw(\tau) d\G d\tau.
\end{align}
Next we must justify passing to the limit as $h\ra 0$ in \eqref{EI5} and \eqref{EI6}. Since $u, \, u_t\in C([0,t];L^2(\O))$ and $w, \, w_t\in C([0,t];L^2(\G))$, then as $h\rightarrow 0$, it follows from  (\ref{DQ6}) that
\begin{align*}
D_h u(0) & \longrightarrow \frac{1}{2}u_t(0) \text{\,\,\,and\,\,\,}
D_h u(t) \longrightarrow \frac{1}{2}u_t(t)  \text{\,\,\,weakly in\,\,\,} L^2(\O),\\
D_h w(0) &  \longrightarrow \frac{1}{2}w_t(0) \text{\,\,\,and\,\,\,}
D_h w(t) \longrightarrow \frac{1}{2}w_t(t)  \text{\,\,\,weakly in\,\,\,} L^2(\G).
\end{align*}
Therefore,
\begin{align}\label{3.11}
\begin{cases}
\lim_{h\rightarrow 0} \Big((u_{t}(t), D_hu(t))_\O- (u_1,D_hu(0))_\O\Big)= \frac{1}{2} \Big(\|u_t(t)\|_2^2 - \|u_t(0)\|_2^2\Big), \vspace{.1in} \\
\lim_{h\rightarrow 0}  (w_t(t)  + \g u(t), D_hw (t) )_\G = \frac{1}{2} |w_t(t)|_2^2 + \frac{1}{2} ( \g u(t), w_t (t) )_\G,   \vspace{.1in} \\
\lim_{h\rightarrow 0} (w_1 +\g u(0) , D_hw(0))_\G  = \frac{1}{2} |w_t(0)|_2^2 + \frac{1}{2} ( \g u(0), w_t (0) )_\G. \\
\end{cases}
\end{align}
Also, by \eqref{DQ4},
\begin{align}\label{EI10}
\int_0^t(u_t,(D_hu)_t)_\O d\tau= \int_0^t(w_t,(D_hw)_t)_\G d\tau=0.
\end{align}
In addition, since $u\in C([0,t];H^1_{\G_0}(\O))$, then \eqref{DQ3} yields
\begin{align}\label{EI11}
\lim_{h\ra 0}\int_0^t(u,D_hu)_{1,\O}d\tau=\frac{1}{2}\left(\|u(t)\|_{1,\O}^2-\|u(0)\|_{1,\O}^2\right).
\end{align}
Similarly, we obtain
\begin{align}\label{EI11.5}
\lim_{h\ra 0}\int_0^t(w,D_hw)_{2,\G}d\tau=\frac{1}{2}\left(\|w(t)\|_{2,\G}^2-\|w(0)\|_{2,\G}^2\right).
\end{align}
In order to address the plate source, we note that $w\in C([0,t];H^2_0(\G))$ and thus $h(w)\in L^{2}(\O\times(0,t))$.  It follows then from \eqref{EI2} that
\begin{align}\label{EI15}
\lim_{h\ra0}\int_0^t\int_\G h(w)D_hwd\G d\tau=\int_0^t\int_\G h(w)w_td\G d\tau.
\end{align}
The trouble terms, $\int_0^t(w_t(\tau),\g D_hu(\tau))_\G d\tau$ and $\int_0^t(\g u(\tau),(D_hw)_t(\tau))_\G d\G d\tau$, are handled as follows.  For all sufficiently small $h>0$, we have
\begin{align}\label{EI16}
&\int_0^t(\g u(\tau),(D_hw)_t(\tau))_\G d\tau\notag\\
&\hspace{5mm}=\frac{1}{2h}\left(\int_0^t(\g u(\tau),w_t(\tau+h))_\G d\tau-\int_0^t(\g u(\tau),w_t(\tau-h))_\G d\tau\right)\notag\\
&\hspace{5mm}=\frac{1}{2h}\left(\int_h^t(\g u(\tau-h),w_t(\tau))_\G d\tau-\int_0^{t-h}(\g u(\tau+h),w_t(\tau))_\G d\tau\right),
\end{align}
where we have used a change of variables in \eqref{EI16} and the fact that $w_t=0$ outside of the interval $[0,t]$.  By rearranging the terms in \eqref{EI16}, we obtain
\begin{align}\label{EI17}
&\int_0^t(\g u(\tau),(D_hw)_t(\tau))_\G d\tau=-\int_0^t(\g D_hu(\tau),w_t(\tau))_\G d\tau\notag\\
&\hspace{5mm}-\frac{1}{2h}\left(\int_0^h(\g u(\tau-h),w_t(\tau))_\G d\tau-\int_{t-h}^t(\g u(\tau+h),w_t(\tau))_\G d\tau\right).
\end{align}
We now utilize the continuity of $w_t$ in the last two terms of \eqref{EI17} as follows.
\begin{align}\label{EI18}
&\frac{1}{2h}\int_0^h(\g u(\tau-h),w_t(\tau))_\G d\tau=\frac{1}{2h}\int_0^h(\g u(0),w_t(\tau))_\G d\tau\notag\\
&\hspace{5mm}=\frac{1}{2h}\int_0^h(\g u(0),w_t(\tau)-w_t(0))_\G d\tau+\frac{1}{2h}\int_0^h(\g u(0),w_t(0))_\G d\tau\notag\\
&\hspace{5mm}\lra\frac{1}{2}(\g u(0),w_t(0))_\G,
\end{align}
as $h\ra 0$.  Similarly, we have
\begin{align}\label{EI19}
&\frac{1}{2h}\int_{t-h}^t(\g u(\tau+h),w_t(\tau))_\G d\tau=\frac{1}{2h}\int_{t-h}^t(\g u(t),w_t(\tau))_\G d\tau\notag\\
&\hspace{5mm}=\frac{1}{2h}\int_{t-h}^t(\g u(t),w_t(\tau)-w_t(t))_\G d\tau+\frac{1}{2h}\int_{t-h}^t(\g u(t),w_t(t))_\G d\tau\notag\\
&\hspace{5mm}\lra\frac{1}{2}(\g u(t),w_t(t))_\G,
\end{align}
as $h\ra0$.  Finally, by adding \eqref{EI5} and \eqref{EI6} and combining \eqref{3.11}-\eqref{EI19}, 
we can pass to the limit as $h\ra 0$ to obtain the energy identity (\ref{energy}) for weak solutions.

\vspace{0.1 in}

\subsection{Continuous Dependence on Initial Data and Uniqueness of Weak Solutions}
\noindent{}In this subsection, we prove that weak solutions continuously depend on the initial data. As a corollary, we also obtain the uniqueness of weak solutions. 

Let $U_0=(u_0,w_0,u_1,w_1)\in H$, where $H=H^1_{\G_0}(\O)\times H^2_0(\G)\times L^2(\O)\times L^2(\G)$.

Assume that $\{U^n_0=(u^n_0,w^n_0,u^n_1,w^n_1)\}$ is a sequence of initial data in $H$ that satisfies
\begin{align}\label{CDID1}
U^n_0\lra U_0\text{ in }H,\text{ as }n \ra\infty.
\end{align}
Let $\{(u^n,w^n)\}$ and $(u,w)$ be the weak solutions to \eqref{PDE} in the sense of Definition \ref{def:weaksln}, corresponding to the initial data $\{U^n_0\}$ and $\{U_0\}$, and define the quadratic energy $E^n(t)$ for a weak solution $(u^n,w^n)$ by
\begin{align}\label{CDID2}
E^n(t):=\frac{1}{2}\left(\|u^n(t)\|_{1,\O}^2+\|w^n(t)\|_{2,\G}^2+\|u^n_t(t)\|_2^2+|w^n_t(t)|_2^2\right).
\end{align}
Similarly to the work in subsection \ref{sub-local} on approximate solutions, 
we can choose a $K$ large enough so that $K^2\geq 4E(0)+1$ and $K^2 \geq 4E^n(0)+1$ for all $n$, and further establish a uniform interval of existence $[0,T_0]$ for $(u,w)$, $(u^n,w^n)$ independent of $n\in\N$, such that $E^n(t) \leq \frac{K^2}{2}$ for all $t\in [0,T_0]$.

Next define $y^n(t):=u(t)-u^n(t)$, $z^n(t):=w(t)-w^n(t)$, and
\begin{align}\label{CDID3}
\tilde{E}_n(t):=\frac{1}{2}\left(\|y^n(t)\|_{1,\O}^2+\|z^n(t)\|_{2,\G}^2+\|y^n_t(t)\|_2^2+|z^n_t(t)|_2^2\right),
\end{align}
for $t\in[0,T_0]$.  As $(u_n,w_n)$ is a weak solution satisfying the variational identities \eqref{wkslnwave} and \eqref{wkslnplt}, we have by construction that $y^n$ and $z^n$ satisfy:
\begin{align}\label{CDID4}
&(y^n_{t}(t),\phi(t))_\O- (y^n_t(0),\phi(0))_\O-\int_0^t(y^n_t(\tau),\phi_t(\tau))_\O d\tau+\int_0^t(y^n(\tau),\phi(\tau))_{1,\O}d\tau\notag\\
&\hspace{5mm}-\int_0^t(z^n_t(\tau),\g\phi(\tau))_\G d\tau=0,
\end{align}
and
\begin{align}\label{CDID5}
&(z^n_t(t),\psi(t))_\G-(z^n_t(0),\psi(0))_\G-\int_0^t(z^n_t(\tau),\psi_t(\tau))_\G d\tau+(\g y^n(t),\psi(t))_\G\notag\\
&\hspace{5mm}-(\g y^n(0),\psi(0))_\G-\int_0^t(\g y^n(\tau),\psi_t(\tau)_\G d\tau+\int_0^t(z^n(\tau),\psi(\tau))_{2,\G} d\tau\notag\\
&\hspace{5mm}=\int_0^t\int_{\G}(h(w(\tau))-h(w^n(\tau)))\psi(\tau) d\G d\tau,
\end{align}
for all $t\in[0,T_0]$ and for all test functions $\phi$ and $\psi$ as described in Definition \ref{def:weaksln}.  Let $\phi(\tau)=D_hy^n(\tau)$ in \eqref{CDID4} and $\psi(\tau)=D_hz^n(\tau)$ in \eqref{CDID5} for $\tau\in[0,t]$ where the difference quotients $D_hy^n$ and $D_hz^n$ are defined in \eqref{DQ1}.  Using a similar argument as in obtaining the energy identity for weak solutions in subsection \ref{sub-EI}, we can pass to the limit as $h\lra0$ and deduce
\begin{align}\label{CDID6}
\tilde{E}_n(t)=\tilde{E}_n(0)+\int_0^t\int_{\G}(h(w(\tau))-h(w^n(\tau)))z^n_t(\tau) d\G d\tau,
\end{align}
for all $t\in[0,T_0]$.  Since $h$ is locally Lipschitz from $H^2_0(\G)$ into $L^2(\G)$ by Proposition \ref{hprop}, then it is straightforward to obtain for each $n$ that
\begin{align}\label{CDID39}
\int_0^t\int_\G (h(w(\tau))-h(w^n(\tau)))z^n_t(\tau)d\G d\tau &\leq C(K)\left(\int_0^t\|z^n\|_{2,\G}^2d\tau\right)^{\frac{1}{2}}\left(\int_0^t|z^n_t|_2^2d\tau\right)^{\frac{1}{2}}\notag\\
&\leq C(K)\int_0^t\tilde{E}_n(\tau)d\tau.
\end{align}
Now from \eqref{CDID6} and \eqref{CDID39}, we have 
\begin{align*}
\tilde{E}_n(t)\leq\tilde{E}_n(0)+C(K)\int_0^t\tilde{E}_n(\tau)d\tau,
\end{align*}
for all $t\in[0,T_0]$.  
By Gronwall's inequality, we obtain
\begin{align}\label{CDID40}
\begin{split}
\tilde{E}_n(t)\leq \tilde{E}_n(0)e^{C(K) T_0},
\end{split}
\end{align}
for all $t\in [0,T_0]$.  From \eqref{CDID1}, we have that $\tilde{E}_n(0)\lra 0$, and thus from \eqref{CDID40} we have that $\tilde{E}_n(t)\lra0$ uniformly on $[0,T_0]$.  This concludes the proof of the continuous dependence on initial data for weak solutions.

Furthermore, by letting $U_0^n = U_0$ in (\ref{CDID1}), we obtain from (\ref{CDID40}) that $\tilde{E}_n (t)=0$ for all $t\in [0,T_0]$. Thus, weak solutions are unique.

\vspace{0.1 in}

\subsection{Global Weak Solutions}
\noindent{}In this subsection, we address the proof of global weak solutions when the source term is bounded by a linear function. We will use the strategy seen in \cite{Becklin-Rammaha2,GR,PRT-p-Laplacain} and other works.  It is the case here that either a given weak solution $(u, w)$ must exist globally in time or else one may find a value of $T_0$ with $0<T_0<\infty$, so that 
\begin{align}\label{4.1}
\limsup_{t\to T_0^-}  E(t)=+\infty,
\end{align}
where $ E(t) = \frac{1}{2}\left(\|u_t(t)\|_2^2+\|\nabla u(t)\|_2^2+|w_t(t)|_2^2+|\Delta w(t)|_2^2\right)$ is the quadratic energy. We aim to show that (\ref{4.1}) cannot happen under the assumptions $|h(s)|\leq c(|s|+1)$.  This assertion is contained in the following proposition, whose proof is identical to the proof of Proposition \ref{prop:ge} for strong solutions, as the energy identity remains valid for weak solutions.

\begin{prop}\label{prop:ge'}
Assume $h\in C^1(\mathbb R)$ such that $|h(s)|\leq c(|s|+1)$ for all $s\in\R$, where $c>0$ is a constant. Let $(u,w)$ be a weak solution of \eqref{PDE} on $[0,T]$. Then the quadratic energy $E(t)$ satisfies
\begin{align*}
E(t)\leq\left(E(0)+C t\right)e^{C t}, \text{ for all}\;\; t\in[0,T].
\end{align*}
\end{prop}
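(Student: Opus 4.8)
The plan is to mimic the proof of Proposition~\ref{prop:ge} verbatim, the only subtlety being that the energy identity \eqref{energy} has already been established for weak solutions in Subsection~\ref{sub-EI}, so every ingredient is available. First I would invoke the energy identity \eqref{energy} for the weak solution $(u,w)$ on $[0,T]$, writing
\[
E(t)=E(0)+\int_0^t\int_\G h(w)w_t\,d\G\,d\tau,\qquad t\in[0,T].
\]
Next I would estimate the source term on the right-hand side using H\"older's and Young's inequalities exactly as in \eqref{GE5}: bound $\int_0^t\int_\G h(w)w_t\,d\G\,d\tau$ by $\tfrac12\int_0^t|h(w)|_2^2\,d\tau+\tfrac12\int_0^t|w_t|_2^2\,d\tau$, then use the hypothesis $|h(s)|\le c(|s|+1)$ to get $|h(w)|_2^2\le C(|w|_2^2+|\G|)$, and finally apply the Poincar\'e inequality $|w|_2\le C|\Delta w|_2$ to absorb $|w|_2^2$ into $E(\tau)$. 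This yields
\[
\left|\int_0^t\int_\G h(w)w_t\,d\G\,d\tau\right|\le C\int_0^t E(\tau)\,d\tau+Ct.
\]

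Combining the two displays gives the integral inequality $E(t)\le E(0)+Ct+C\int_0^t E(\tau)\,d\tau$ for all $t\in[0,T]$, and Gronwall's inequality then produces $E(t)\le(E(0)+Ct)e^{Ct}$, which is the claimed bound. Since the statement itself asserts that the proof is identical to that of Proposition~\ref{prop:ge}, I would simply note this and not reproduce the routine computation in full detail.

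There is essentially no obstacle here, since the energy identity for weak solutions was the only nontrivial input and it is already in hand; the remaining work is the same elementary chain of H\"older, Young, Poincar\'e, and Gronwall used for strong solutions. The one point worth a sentence of care is that the manipulations in \eqref{GE5} only require the regularity $u_t\in C([0,T];L^2(\O))$, $w_t\in C([0,T];L^2(\G))$, $w\in C([0,T];H^2_0(\G))$ guaranteed by Definition~\ref{def:weaksln}, so no extra smoothness of $(u,w)$ is needed. After proving the proposition, I would immediately remark that it rules out the blow-up alternative \eqref{4.1}: on any finite interval $[0,T_0)$ the energy $E(t)$ stays bounded by $(E(0)+CT_0)e^{CT_0}<\infty$, contradicting $\limsup_{t\to T_0^-}E(t)=+\infty$; hence the local weak solution extends to a global one and $T_0$ may be taken arbitrarily large, completing the proof of Theorem~\ref{thm:localexist}.
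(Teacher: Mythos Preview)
Your proposal is correct and matches the paper's approach exactly: the paper does not even write out a separate proof, stating only that it is identical to that of Proposition~\ref{prop:ge} since the energy identity \eqref{energy} has been established for weak solutions. Your observation that the estimates in \eqref{GE5} require only the regularity guaranteed by Definition~\ref{def:weaksln} is precisely the point, and your concluding remark on ruling out the blow-up alternative \eqref{4.1} is also in line with the paper's use of the proposition.
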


\vspace{0.1 in}

\section{Global Existence of Potential Well Solutions} \label{sec-potential}
\subsection{Local Behavior of Potential Well Solutions}
To begin our proof of Theorem \ref{thm:wellsol}, we must first establish the local behavior of potential well solutions with initial data in $\mathscr{W}_1$.  Indeed, we have the following proposition.

\begin{prop}\label{prop:W1}
Let $h$ satisfy Assumption \ref{assumption3}. Assume $(u_0,w_0)\in\mathscr{W}_1$ with $\mathscr{E}(0)<d$. Let $(u(t),w(t))$ be the local weak solution to \eqref{PDE} on $[0,T_0]$. Then $(u(t),w(t))\in \mathscr{W}_1$ for all $t\in[0,T_0]$.
\end{prop}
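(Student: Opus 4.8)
The plan is to run a continuity argument on the interval $[0,T_0]$, using the conservation of total energy $\mathscr{E}(t)=\mathscr{E}(0)<d$ together with the fact that $\mathscr{W}_1$ and $\mathscr{W}_2$ are separated by the Nehari manifold $\mathscr{N}$, which the solution cannot cross while staying in the potential well. First I would record that $\mathscr{J}(u(t),w(t))\le\mathscr{E}(t)=\mathscr{E}(0)<d$ for all $t\in[0,T_0]$, which follows from \eqref{totalen}, \eqref{modenergy}, and the nonnegativity of the kinetic term $\tfrac12(\|u_t\|_2^2+|w_t|_2^2)$. Hence $(u(t),w(t))\in\mathscr{W}$ for all $t\in[0,T_0]$, so at each time $(u(t),w(t))$ lies in either $\mathscr{W}_1$ or $\mathscr{W}_2$ (recall $\mathscr{W}_1\cup\mathscr{W}_2=\mathscr{W}$ and $\mathscr{W}_1\cap\mathscr{W}_2=\varnothing$).

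Next I would argue by contradiction: suppose $(u(t),w(t))\notin\mathscr{W}_1$ for some $t\in[0,T_0]$, and let
$$t^*:=\inf\{t\in[0,T_0]:(u(t),w(t))\in\mathscr{W}_2\}.$$
Since $(u_0,w_0)\in\mathscr{W}_1$ and, by the regularity in Definition \ref{def:weaksln}, the map $t\mapsto(u(t),w(t))$ is continuous into $V=H^1_{\G_0}(\O)\times H^2_0(\G)$, the quantities $\|\nabla u(t)\|_2^2+|\Delta w(t)|_2^2$ and $\int_\G h(w(t))w(t)\,d\G$ are continuous in $t$ — the latter because $H^2_0(\G)\hookrightarrow L^\infty(\G)$ and $h\in C^1$, so $w\mapsto\int_\G h(w)w\,d\G$ is continuous on $H^2_0(\G)$. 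Therefore $t^*>0$, and at $t=t^*$ the strict inequality defining $\mathscr{W}_2$ must degenerate to an equality, i.e. $\|\nabla u(t^*)\|_2^2+|\Delta w(t^*)|_2^2=\int_\G h(w(t^*))w(t^*)\,d\G$. If $(u(t^*),w(t^*))=(0,0)$ then $t^*$ is trivially in $\mathscr{W}_1$; otherwise $(u(t^*),w(t^*))\in\mathscr{N}$, hence $\mathscr{J}(u(t^*),w(t^*))\ge d$ by the definition \eqref{depth} of $d$, contradicting $\mathscr{J}(u(t^*),w(t^*))<d$ established above. In either case we reach a contradiction, so no such $t$ exists and $(u(t),w(t))\in\mathscr{W}_1$ for all $t\in[0,T_0]$.

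The main obstacle I anticipate is making the continuity/crossing step fully rigorous: one must confirm that the solution cannot jump from $\mathscr{W}_1$ to $\mathscr{W}_2$ without the Nehari functional $\langle\mathscr{J}'(u,w),(u,w)\rangle=\|\nabla u\|_2^2+|\Delta w|_2^2-\int_\G h(w)w\,d\G$ vanishing at an intermediate time, and that at such a time the point is genuinely on $\mathscr{N}$ (i.e. not $(0,0)$) — here one invokes the argument in the proof of Proposition \ref{positived} showing that a Nehari point with $w=0$ forces $(u,w)=(0,0)$, together with the fact that $(0,0)\in\mathscr{W}_1$. The only delicate point is the continuity of $t\mapsto\int_\G h(w(t))w(t)\,d\G$ as a function on $[0,T_0]$, which I would handle by combining the continuity $w\in C([0,T_0];H^2_0(\G))$ with the Sobolev embedding $H^2_0(\G)\hookrightarrow C(\overline{\G})$ and the continuity of $s\mapsto h(s)s$; everything else is routine.
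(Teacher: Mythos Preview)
Your overall strategy matches the paper's: use $\mathscr{E}(t)=\mathscr{E}(0)<d$ to keep the trajectory in $\mathscr{W}$, then argue by continuity that it cannot cross from $\mathscr{W}_1$ to $\mathscr{W}_2$ without hitting $\mathscr{N}$, which is forbidden since $\mathscr{J}<d$ there. The continuity of $t\mapsto\int_\G h(w(t))w(t)\,d\G$ is handled the same way in the paper, via $w\in C([0,T_0];H^2_0(\G))\hookrightarrow C([0,T_0];L^\infty(\G))$ and $h\in C^1$.

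There is, however, a genuine gap in your treatment of the case $(u(t^*),w(t^*))=(0,0)$. Observing that $(0,0)\in\mathscr{W}_1$ does \emph{not} yield a contradiction with your definition $t^*=\inf\{t:(u(t),w(t))\in\mathscr{W}_2\}$, because the infimum need not be attained in $\mathscr{W}_2$; nothing you have said so far prevents the solution from sitting at the origin at time $t^*$ and entering $\mathscr{W}_2$ immediately afterward. Your last paragraph gestures at Proposition~\ref{positived}, but you cite the wrong ingredient (``a Nehari point with $w=0$ forces $(u,w)=(0,0)$''): what is actually needed is the quantitative estimate \eqref{posd2}--\eqref{posd4}, which uses condition~(4) of Assumption~\ref{assumption3} ($h(s)/s\to 0^+$) to show that any point satisfying $\|\nabla u\|_2^2+|\Delta w|_2^2\le\int_\G h(w)w\,d\G$ with $(u,w)\neq(0,0)$ obeys a uniform lower bound $\|\nabla u\|_2^2+|\Delta w|_2^2\ge c_0^2>0$. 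The paper exploits this as follows: for $t$ in an interval just to the right of $t^*$ the solution lies in $\mathscr{W}_2$, hence $\|(u(t),w(t))\|_V\ge c_0$; letting $t\downarrow t^*$ and using continuity gives $\|(u(t^*),w(t^*))\|_V\ge c_0>0$, contradicting $(u(t^*),w(t^*))=(0,0)$. Equivalently, you could phrase this as showing that $\mathscr{W}_1$ contains a full $V$-neighborhood of the origin, so that the solution cannot leave $\mathscr{W}_1$ immediately after visiting $(0,0)$; either way the key input is condition~(4), not the observation about $w=0$.
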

\begin{proof}
Let $(u(t),w(t))$ be local weak solutions pertaining to initial data $(u_0,w_0)\in\mathscr{W}_1$ with $\mathscr{E}(0)<d$.  
Note from \eqref{totalen} and \eqref{modenergy}, we have that
\begin{align}\label{potwell1}
\mathscr{J}(u(t),w(t))\leq\mathscr{E}(t)=\mathscr{E}(0)<d,
\end{align}
and thus $(u(t),w(t))\in\mathscr{W}$ for all $t\in[0,T_0]$, by \eqref{potentialwell}.

Assume by way of contradiction that there exists a time $t_1\in[0,T_0]$ such that $(u(t_1),w(t_1))\notin\mathscr{W}_1$.  But as $(u(t_1),w(t_1))\in\mathscr{W}$ and $\mathscr{W}=\mathscr{W}_1\cup\mathscr{W}_2$, we can conclude that $(u(t_1),w(t_1))\in\mathscr{W}_2$.

We next wish to show the continuity on $[0,T_0]$ of the map
\begin{align}\label{map}
t\mapsto\int_\G h(w(t)) w(t) d\G.
\end{align}
To start, note that 
\begin{align}\label{hcontinuity}
\int_\G&\Big|h(w(t))w(t)-h(w(t_0))w(t_0)\Big|d\G\notag\\
&\leq \int_\G|h(w(t))-h(w(t_0))||w(t)|d\G+\int_\G |h(w(t_0))||w(t)-w(t_0)|d\G.
\end{align}
Since $w\in C([0,T_0];H^2_0(\G))$, there exists a constant $M>0$ such that 
\begin{align}  \label{M-bd}
|w(t)|_{\infty} \leq C |\Delta w(t)|_2 \leq M, \;\;\text{for all} \;\;t\in [0,T_0].
\end{align}
Utilizing that $h\in C^1(\R)$ and the mean value theorem, as well as (\ref{M-bd}), we can write
\begin{align}\label{hcontinuity1}
\int_\G&|h(w(t))-h(w(t_0))||w(t)|d\G\notag\\
&\leq\int_\G   \max_{|s|\leq M} |h'(s)||w(t)-w(t_0)||w(t)|d\G\notag\\
&\leq C  |w(t)-w(t_0)|_2 |w(t)|_2.
\end{align}
Also with H\"{o}lder's inequality, we can write
\begin{align}\label{hcontinuity2}
\int_\G |h(w(t_0))||w(t)-w(t_0)|d\G\leq |h(w(t_0))|_2 |w(t)-w(t_0)|_2.
\end{align}
Combining \eqref{hcontinuity} and \eqref{hcontinuity1}-\eqref{hcontinuity2}, we conclude
\begin{align}\label{hcontinuity3}
\int_\G&\Big|h(w(t))w(t)-h(w(t_0))w(t_0)\Big|d\G  \leq C\left(|w(t)|_2+|h(w(t_0))|_2 \right)|w(t)-w(t_0)|_2.
\end{align}
As $w\in C([0,T_0];H^2_0(\G))$, then $w\in C([0,T_0];L^2(\G))$. Thus, we can let $t$ approach $t_0$ in \eqref{hcontinuity3} and conclude that 
\begin{align}\label{map2}
\int_\G h(w(t))w(t) d\G\lra\int_\G h(w(t_0))w(t_0) d\G
\end{align}
as $t\ra t_0$.  This yields the desired continuity of \eqref{map}. Therefore, the map
\begin{align}\label{potwell2}
t\mapsto \|\nabla u(t)\|_2^2+|\Delta w(t)|_2^2-\int_\G h(w(t))w(t) d\G
\end{align}
is continuous on $[0,T_0]$.

Considering that $(u(0),w(0))\in\mathscr{W}_1$ and $(u(t_1),w(t_1))\in\mathscr{W}_2$, we can conclude by the continuity of \eqref{potwell2} that there exists a time $s\in(0,t_1)$ such that 
\begin{align}\label{potwell3}
\|\nabla u(s)\|_2^2+|\Delta w(s)|_2^2=\int_\G h(w(s))w(s) d\G.
\end{align}

Let $t^*$ be the supremum of all $s\in (0,t_1)$ which satisfy \eqref{potwell3}.  By the continuity of \eqref{potwell2} we have $t^*\in(0,t_1)$ with $t^*$ satisfying \eqref{potwell3}, and that for any $t\in (t^*,t_1]$, $(u(t),w(t))\in\mathscr{W}_2$.  

Consider briefly the possibility of $(u(t^*),w(t^*))\neq(0,0)$.  From \eqref{nehari2} and \eqref{potwell3} we would have $(u(t^*),w(t^*))\in\mathscr{N}$, but this would imply by \eqref{depth} that $\mathscr{J}(u(t^*),w(t^*))\geq d$, contradicting \eqref{potwell1}.  Therefore we have that $(u(t^*),w(t^*))=(0,0)$.

Note now that $(u(t),w(t))\in\mathscr{W}_2$ for all $t\in(t^*,t_1]$ allows us to conclude from the definition of $\mathscr{W}_2$ that 
\begin{align}\label{potwell4}
\|\nabla u(t)\|_2^2+|\Delta w(t)|_2^2<\int_\G h(w(t))w(t)d\G
\end{align}
for all $t\in(t^*,t_1]$.  Invoking the same strategy used for lines \eqref{posd2}-\eqref{posd4} in the proof of Proposition \ref{positived}, one can quickly conclude that $\|\nabla u(t)\|_2^2+|\Delta w(t)|_2^2>s_0$ for all $t\in (t^*,t_1]$, where $s_0>0$ depends on $\G$.  As $(u,w)$ is continuous from $[0,T_0]$ into $H^1_{\G_0}(\O)\times H^2_0(\G)$, one must then have that $\|\nabla u(t^*)\|_2^2+|\Delta w(t^*)|_2^2 \geq s_0 >0$, a clear contradiction of $(u(t^*),w(t^*))=(0,0)$.  Thus by way of contradiction, $(u(t),w(t))\in\mathscr{W}_1$ for all $t\in[0,T_0]$.
\end{proof}

\subsection{Extension in Time of Potential Well Solutions}
In this final subsection we look to prove that under the hypotheses of Proposition \ref{prop:W1}, the quadratic energy $E(t)$ has a uniform bound independent of time, enabling us to invoke a standard extension procedure to establish global weak solutions. 
To this end, note from \eqref{potentialenergy} and \eqref{potwell1} that
\begin{align}\label{potwell5}
\mathscr{J}(u(t),w(t)) = \frac{1}{2}\left(\|\nabla u(t)\|_2^2+|\Delta w(t)|_2^2\right)-\int_\G H(w(t)) d\G<d,
\end{align}
for all $t\in [0,T_0]$.  As $(u(t),w(t))\in\mathscr{W}_1$ for all $t\in[0,T_0]$, we further have from condition (1) of Assumption \ref{assumption3} that
\begin{align}\label{potwell6}
\|\nabla u(t)\|_2^2+|\Delta w(t)|_2^2>\theta\int_\G H(w(t))d\G,
\end{align}
when $(u,w) \not=(0,0)$. It follows from \eqref{potwell5} and \eqref{potwell6} that
\begin{align}\label{potwell7}
\int_\G H(w(t))d\G<\frac{2d}{\theta-2}.
\end{align}
Substituting \eqref{potwell7} into \eqref{totalen} and using \eqref{modenergy}, we obtain
\begin{align}\label{potwell8}
    E(t)<\mathscr{E}(0)+\frac{2d}{\theta-2}<d+\frac{2d}{\theta-2}=\frac{\theta d}{\theta-2},
\end{align}
for all $t\in [0,T_0]$.  As the desired uniform bound (\ref{potwell8}) has been found, we can indeed extend the weak solution of $(u,w)$ in time indefinitely, finishing the proof of Theorem \ref{thm:wellsol}.

Moreover, by assuming additional regularity of the initial data, i.e., 
$U_0 = (u_0, w_0, u_1, w_1) \in \mathcal{D}(\mathscr A)$,
we can establish the existence of a global strong solution in the potential well.  
Due to (\ref{LLS9}), for strong solutions, the local existence time $T_0$ depends on $K$, where we let
\[
K^2 = \frac{4\theta d}{\theta-2} + 1 > 4E(0) + 1,
\]
in view of \eqref{potwell8}.  
Furthermore, by Theorem \ref{thm:strong}, we have 
$U(t) \in \mathcal{D}(\mathscr A)$, for all $t \in [0,T_0]$,
and by \eqref{potwell8}, $K^2 = \frac{4\theta d}{\theta-2} + 1 > 4E(T_0) + 1$.
Thus, we can extend the local strong solution from $[0,T_0]$ to $[T_0,2T_0]$.  
Iterating this procedure yields the global strong solution in the potential well.
This completes the proof of Theorem \ref{thm:wellsol2}.

\vspace{0.1 in}

\bibliographystyle{abbrv}
\bibliography{mohbib-new}

\end{document}